\documentclass[a4paper,11pt,DIV=12,headings=standardclasses, headings=small]{scrartcl}
\usepackage{nag}
\usepackage[utf8]{inputenc}
\usepackage[english]{babel}

\usepackage{amscd} 
\usepackage{amsmath}
\usepackage{amsthm}
\usepackage{amsfonts}
\usepackage{amssymb}
\usepackage{mathrsfs} 

\usepackage{graphicx}
\usepackage{tikz}
\usepackage{tikz-cd}
\usetikzlibrary{arrows,arrows.meta, shapes, shapes.geometric, shapes.misc, trees, graphs, matrix,  automata, backgrounds, calendar, positioning,external}
\tikzcdset{arrow style=tikz, diagrams={>=stealth}}

\usepackage{pxfonts}
\usepackage[T1]{fontenc} 
\usepackage[pdfpagelabels,hidelinks]{hyperref}

\setlength{\marginparwidth}{2cm}
\usepackage[textsize=tiny,background color=white]{todonotes}

\linespread{1.1}

\usepackage[numbers]{natbib}

\DeclareMathOperator{\SO}{SO}

\DeclareMathOperator{\Ext}{Ext}

\DeclareMathOperator{\Id}{Id}
\newcommand{\id}{\text{id}}

\DeclareMathOperator{\Hom}{Hom}

\DeclareMathOperator{\Aut}{Aut}

\DeclareMathOperator{\Tr}{Tr}
\DeclareMathOperator{\fw}{fw}

\newcommand{\C}{\mathbb{C}}
\newcommand{\Z}{\mathbb{Z}}
\newcommand{\Q}{\mathbb{Q}}

\newcommand{\N}{\mathbb{N}}

\newcommand{\cC}{{\mathcal C}}


\newcommand{\De}{\Delta}

\newcommand{\va}{\varphi}


\newcommand{\ra}{{\rightarrow}}

\newcommand{\lmt}{\longmapsto}
\newcommand{\hra}{\hookrightarrow}
\newcommand{\lra}{\longrightarrow}

\newcommand{\half}{{{\frac{1}{2}}}}

\DeclareMathOperator{\B}{R}
\DeclareMathOperator{\E}{S}

\DeclareMathOperator{\haut}{hAut}
\DeclareMathOperator{\Bhaut}{BhAut}

\DeclareMathOperator{\Der}{Der}



\newcommand{\dash}{\text{-}}

\numberwithin{equation}{section}
 

\makeatletter
\newtheorem*{rep@theorem}{\rep@title}
\newcommand{\newreptheorem}[2]{%
\newenvironment{rep#1}[1]{%
 \def\rep@title{#2 \ref{##1}}%
 \begin{rep@theorem}}%
 {\end{rep@theorem}}}
\makeatother

\newtheorem{thm}{Theorem}[section]
\newreptheorem{theorem}{Theorem}

\newtheorem{cor}[thm]{Corollary}
\newtheorem{lem}[thm]{Lemma}
\newtheorem{prop}[thm]{Proposition}


\newtheorem{thmL}{Theorem}

\theoremstyle{definition}
\newtheorem{defn}[thm]{Definition}
\newtheorem{ex}[thm]{Example}
\newtheorem{rem}[thm]{Remark}

\theoremstyle{remark}


\makeatletter
\renewenvironment{proof}[1][\proofname] {\par\pushQED{\qed}\normalfont\topsep6\p@\@plus6\p@\relax\trivlist\item[\hskip\labelsep\bfseries#1\@addpunct{.}]\ignorespaces}{\popQED\endtrivlist\@endpefalse}
\makeatother


\begin{document}
\title{\Large Tautological Rings of Fibrations}
\author{\large Nils Prigge}
\date{}
\maketitle
\begin{abstract}\noindent
\textbf{Abstract.} We study the analogue of tautological rings of fibre bundles in the context of fibrations with Poincar\' e fibre, i.e.\ the ring obtained by fibre integrating powers of the fibrewise Euler class. We discuss how to compute the Euler ring with tools from rational homotopy theory and completely determine the tautological ring for even spheres, complex projective spaces and some products of odd spheres.
\end{abstract}

\tableofcontents

\section{Introduction}
Let $\pi: E\ra B$ be a fibration with fibre $X$ an oriented Poincar\' e duality space of formal dimension $d$ (cf.\,\cite[Ch.\,1]{Wa67})\footnote{If $X$ is a simply connected finite CW complex, this just means that we (can) choose a fundamental class $[X]\in H_d(X;\Z)$ that induces the Poincar\'e duality isomorphism for all local coefficient systems.}. It is called \emph{oriented} if the corresponding local coefficient system $\mathcal{H}^d(X)$ is trivial and we choose an isomorphism $\mathcal{H}^d(X)\cong \Z$. In \cite[Sect.\,3]{HLLRW17} the authors construct for such fibrations a \emph{fibrewise Euler class} $e^{\fw}(\pi)\in H^d(E)$ which extends the construction for smooth fibre bundles, i.e.\,if $X$ is a closed, oriented manifold and $\pi:E\ra B$ is an oriented fibre bundle, then the fibrewise Euler class agrees with the Euler class of the vertical tangent bundle $T_{\pi}E\ra E$ which, if $\pi$ is a smooth submersion, is defined as the kernel of the differential of the projection map $T_{\pi}E:=\ker(D\pi:TE\ra TB)\subset TE$.

\smallskip
Using the fibrewise Euler class we can extend the construction of \emph{tautological classes} of smooth fibre bundles \cite{Mum83,RW16,GGRW17,Gri17} to fibrations. Recall that given a smooth, oriented fibre bundle $\pi:E\ra B$ with fibre $M$ a closed manifold and a class $c\in H^{|c|}(\mathrm{B}\SO(d))$, the fibre integral $\kappa_c(\pi):=\int_{\pi}c(T_{\pi}E)\in H^{|c|-d}(B)$ is a characteristic classes of the bundle and the subring of $H^*(B)$ generated by all such tautological classes is called the \emph{tautological ring} $R^*(\pi)$. 

As we can define fibre integration more generally for oriented fibrations with Poincar\'e fibre (see \cite[Sect.\,8]{BH58} and Section \ref{FibreIntegration}), we obtain analogous characteristic classes of oriented fibrations $\pi:E\ra B$ with Poincar\'e fibre by setting
\[\kappa_i(\pi):=\pi_!(e^{\fw}(\pi)^{i+1})\in H^{i\cdot d}(B),\]
where $\pi_!:H^*(E)\ra H^{*-d}(B)$ is another notation for the fibre integration map. In particular, if 
\begin{equation}\label{UnivXFibr}
X\hra E\overset{\pi}{\lra} \Bhaut^+(X),
\end{equation}
denotes the universal oriented $X$-fibration that classifies oriented $X$-fibrations \cite{St63} over the classifying space $\Bhaut^+(X)$ of the monoid of orientation preserving homotopy self-equivalences $\haut^+(X)$, we can study the subring generated by the tautological classes.
\begin{defn}
  Let $X$ be an oriented Poincar\' e duality space of formal dimension $d$. The \emph{Euler ring} $E^*(X)$ is the subring of $H^*(\Bhaut^+(X))$ generated by all tautological classes
  \begin{equation}
  \kappa_i:=\pi_!(e^{\text{fw}}(\pi)^{i+1})\in H^{id}(\Bhaut^+(X)),
  \end{equation}
  where $e^{\text{fw}}(\pi)\in H^d(E)$ is the fibrewise Euler class of the universal oriented $X$-fibration in \eqref{UnivXFibr}. 
\end{defn}
Unlike the smooth tautological ring, the computation of the Euler ring $E^*(X)$ is a purely homotopy theoretic problem. The main content of this paper is to work out how one can use the models from rational homotopy theory for fibrations to compute Euler rings. Specifically, we determine representatives of the fibre integration maps and fibrewise Euler classes in terms of the algebraic models from rational homotopy theory. The computation of the Euler ring turns out to be particularly tractable for rationally elliptic spaces and we determine the Euler ring for two classes of examples.
\begin{thmL}
 The Euler ring of complex projective space is $E^*(\C P^n)\cong \Q[\kappa_1,\hdots,\kappa_{n-1},\kappa_{n+1}]$.
\end{thmL}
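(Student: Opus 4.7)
My plan proceeds in four steps, using the explicit complete-intersection cdga model from the preceding proposition.

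First, I would compute the fibrewise Euler class by direct application of the complete-intersection formula cited in the paper. With the single defining relation $f = x^{n+1} - \sum_{i=2}^{n+1} x_i\, x^{n+1-i}$, this gives
\[
e^{\text{fw}}(\pi) \;=\; \frac{\partial f}{\partial x} \;=\; (n+1)\,x^n \;-\; \sum_{i=2}^{n}(n+1-i)\, x_i\, x^{n-i}\;\in\; E_n.
\]
Second, since $E_n$ is free of rank $n+1$ over $B_n$ on the basis $1,x,\ldots,x^n$ and the orientation $\varepsilon_X$ pairs with $x^n$, fibre integration $\pi_!$ amounts to reducing modulo $f$ and extracting the coefficient of $x^n$. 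This turns the computation of each $\kappa_i = \pi_!\bigl(e^{\text{fw}}(\pi)^{i+1}\bigr)$ into an explicit polynomial identity in $B_n$.

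Third, to identify generators I would invoke the Cayley--Hamilton argument of Randal--Williams \cite{RW16}: multiplication by $x$ is a $B_n$-linear endomorphism of $E_n$ whose characteristic polynomial is $f$, so all of its higher powers are determined by $x_2,\ldots,x_{n+1}$. Combined with Newton-type identities and the description of $\pi_!$ above, this forces the algebra generated by all $\kappa_i$ to already be generated by finitely many of them. Careful bookkeeping of total degrees, together with the observation that the coefficient $(n+1-i)$ in the Euler class vanishes exactly at $i=n+1$, singles out $\kappa_n$ as the redundant index and leaves $\kappa_1,\ldots,\kappa_{n-1},\kappa_{n+1}$ as the claimed generating set.

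Finally, to prove algebraic independence, observe that $B_n = \Q[x_2,\ldots,x_{n+1}]$ has Krull dimension $n$ and we propose exactly $n$ generators, so independence is equivalent to exhibiting a polynomial subalgebra of maximal dimension. I would filter $B_n$ by the power of $x_{n+1}$ and track the leading term of each $\kappa_i$: the dominant contribution to $(e^{\text{fw}})^{i+1}$ after reduction modulo $f$ carries a determined $x_{n+1}$-weight, and the $n$ classes triangulate with respect to this filtration. The hardest step, in my view, is this final one --- giving a precise enough accounting of the reduction modulo $f$ to read off the $x_{n+1}$-leading terms of the $\kappa_i$ --- whereas Steps 1--3 are essentially template applications of the structural results already assembled in the paper.
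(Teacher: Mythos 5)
Your Steps 1--2 are correct and match the paper's setup, and Step 4 (tracking $x_{n+1}$-leading terms) is in the right spirit: the paper carries this out by showing $\det(\partial\kappa_i/\partial x_j)\neq 0$, isolating the unique monomial $C\cdot x_{n+1}^{N}$ that can arise, which is essentially the triangularity you describe. However, there are two genuine gaps.

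First, a step is missing entirely: the model $B_n\to E_n$ is a model of the \emph{1-connected} universal fibration $\B\haut_{0,*}(\C P^n)\to\B\haut_0(\C P^n)$, whereas $E^*(\C P^n)$ is by definition a subring of $H^*(\B\haut^+(\C P^n);\Q)$. You need to show these agree. The paper does this by observing that $[\C P^n,\C P^n]\cong H^2(\C P^n;\Z)$ makes $\pi_0(\haut^+(\C P^n))$ finite, so by transfer $H^*(\B\haut^+(\C P^n);\Q)$ injects into $H^*(\B\haut_0(\C P^n);\Q)$, forcing $E^*(\C P^n)\cong E_0^*(\C P^n)$. Without this, the computation in the model proves a statement about $E_0^*$, not about the Euler ring as defined.

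Second, your Step 3 applies Cayley--Hamilton to the wrong operator. The characteristic polynomial of multiplication by $x$ on $E_n$ is precisely $f$, the defining relation of $E_n$; Cayley--Hamilton applied to it is a tautology and tells you nothing about the $\kappa_i$ as elements of $B_n$. The argument in \cite{RW16} and in this paper applies Cayley--Hamilton to multiplication by $e^{\text{fw}}(\pi)$, using the identity $\Tr_{E_n/B_n}(c)=\pi_!(e^{\text{fw}}(\pi)\cdot c)$, so that the coefficients of the characteristic polynomial $p(e)$ are polynomials in the $\kappa_j$. Fibre integrating $p(e^{\text{fw}}(\pi))\cdot e^{\text{fw}}(\pi)^k=0$ then expresses each $\kappa_{n+k}$ for $k\ge 0$ (except $k=1$) in terms of $\kappa_1,\ldots,\kappa_{n-1}$, while $\kappa_{n+1}$ survives because the constant term of $p$ already contains it and fibre integrating $p(e^{\text{fw}}(\pi))\cdot e^{\text{fw}}(\pi)$ yields no new relation. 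Your explanation --- that the coefficient $(n+1-i)$ in $e^{\text{fw}}(\pi)$ vanishes at $i=n+1$ --- is an unrelated observation and does not explain the role of $\kappa_{n+1}$. The remainder of your Step 3 (``Newton-type identities,'' ``careful bookkeeping of total degrees'') is too vague to establish the claimed generating set.
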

\begin{thmL}
 Let $X$ be either rationally equivalent to $S^{2k+1}\times \hdots \times S^{2k+1}$ for $k\geq 1$ or a finite CW complex rationally equivalent to a product of two odd dimensional simply connected spheres of different dimension. Then $E^*(X)=\Q$.
\end{thmL}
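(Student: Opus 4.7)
The plan is to apply the Serre spectral sequence of the fibration
\[
\B\haut_0(X) \lra \B\haut^+(X) \lra \B\Gamma, \qquad \Gamma := \pi_0(\haut^+(X)),
\]
together with the preceding proposition, which gives $E^*_0(X)=\Q$: since $\chi(X)=0$ for any product of odd spheres we have $\kappa_0=0$, and for $i\geq 1$ the class $\kappa_i$ restricts to $\pi_!(0^{i+1})=0$ on $\B\haut_0(X)$. Hence every $\kappa_i$ lies in the kernel of the edge map $H^*(\B\haut^+(X);\Q)\to H^*(\B\haut_0(X);\Q)^{\Gamma}$, i.e.\ in the filtration $F^{\geq 1}$. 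The goal is to show this kernel carries no nonzero $\kappa_i$.

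For the different-dimensional case---$X$ a finite CW complex with $X\simeq_{\Q} S^{2k+1}\times S^{2l+1}$, $k\neq l$---I would first show $\Gamma$ is finite. By Sullivan's arithmeticity theorem \cite[Thm.~10.3]{Sul77}, $\Gamma$ is commensurable with the $\Z$-points of the $\Q$-algebraic group of orientation-preserving graded automorphisms of the minimal Sullivan model $(\Lambda(x,y),0)$. Because $|x|=2k+1\neq 2l+1=|y|$, such automorphisms must be diagonal $(x,y)\mapsto(ax,by)$, and orientation preservation reads $ab=1$; this algebraic group is $\mathbb{G}_m$, whose $\Z$-points $\{\pm 1\}$ form a finite group. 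So $\Gamma$ is finite, $H^{>0}(\B\Gamma;\Q)=0$, and the Serre spectral sequence yields an injection $H^*(\B\haut^+(X);\Q)\hookrightarrow H^*(\B\haut_0(X);\Q)^{\Gamma}$; the $\kappa_i$ lie in the kernel and vanish.

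For the equal-dimensional case $X\simeq_{\Q}(S^{2k+1})^n$, the sub-case $n=1$ is immediate because every degree-$+1$ self-equivalence of $S^{2k+1}$ is homotopic to the identity, so $\pi_0(\haut^+(S^{2k+1}))$ is trivial and $\B\haut^+(X)=\B\haut_0(X)$. For $n\geq 2$, $\Gamma$ is commensurable with $\operatorname{SL}_n(\Z)$ (or equal to $\operatorname{SL}_n(\Q)$ when $X=X_\Q$), which is infinite, so the injection argument fails. Instead I would use the algebraic model of Theorem A together with the chain-level representative of $e^{\text{fw}}(\pi)$ from Sections 3--4. The minimal Lie model $L=\LL(y_1,\ldots,y_n)$, with $|y_i|=2k$ and zero differential, is highly symmetric under the natural $\Gamma$-action on the generating space $V$, and by tracking this $\Gamma$-equivariance one promotes the vanishing of the fibrewise Euler class from $\B\haut_{0,*}(X)$ to the full $\B\haut^+_*(X)$; once $e^{\text{fw}}=0$ there, one has $\kappa_i=\pi_!(e^{\text{fw}})^{i+1}=0$ for all $i\geq 1$.

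The main obstacle is precisely the sub-case $n\geq 2$ of equal-dimensional spheres: finiteness of $\Gamma$ is unavailable, so one must perform an equivariant refinement of the preceding proposition, carefully analysing the $\Gamma$-action on the derivation Lie algebra $\Der^+(\LL V)\sslash\ad\LL V$ to rule out any nonzero lift of $e^{\text{fw}}$ in degree $d$. The symmetry of the model---$V$ being a single irreducible representation of $\operatorname{SL}(V)$---is what makes this feasible for equal-dimensional spheres but not for the asymmetric different-dimensional situation, where one must instead rely on finiteness of $\Gamma$ supplied by the finite CW hypothesis.
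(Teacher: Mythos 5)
Your treatment of the different-dimensional case is exactly the paper's: by Sullivan's arithmeticity, $\pi_0(\haut^+(X))$ is commensurable with an arithmetic subgroup of $\operatorname{SL}_1 \cong \mathbb{G}_m$ and hence finite, so the universal-covering spectral sequence collapses and $H^*(\B\haut^+(X);\Q)$ injects into $H^*(\B\haut_0(X);\Q)$, where $\kappa_i=0$. Your observation that the $n=1$ sub-case of equal spheres is trivial is also fine.

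However, there is a genuine gap in your equal-dimensional case for $n\geq 2$, which you yourself flag as ``the main obstacle'' without resolving. The proposed ``equivariant refinement'' tracking the $\operatorname{SL}_n$-action on $\Der^+(\LL V)\sslash\ad\LL V$ is not carried out, and it aims at the wrong target: the paper does \emph{not} prove $e^{\text{fw}}(\pi)=0$ in $H^d(\B\haut_*^+(X);\Q)$. The actual argument is much simpler and bypasses $\Gamma$-equivariance entirely. For $X\simeq_{\Q}(S^{2k+1})^n$ the constraint $|S|<|f|$ forces $\Der^+(A_F)$ to consist only of the pure partials $\partial/\partial x_i$, all in degree $2k+1$, so the cdga model of the total space $\B\haut_{0,*}(X)$ from Theorem~\ref{modelcdga} is the contractible algebra
\[
E_F=\bigl(\Lambda(x_1,\hdots,x_n,y^1,\hdots,y^n),\;D(x_i)=-y^i\bigr),
\]
i.e.\ $\B\haut_{0,*}(X)$ is rationally contractible. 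Hence $\B\haut^+_*(X)\ra\B\pi_0(\haut^+_*(X))$ is a rational equivalence, so $e^{\text{fw}}(\pi)=\pi^*h^*e$ for some $e\in H^{n(2k+1)}(\B\pi_0(\haut^+(X));\Q)$, and then the projection formula gives $\kappa_i=\pi_!\bigl((\pi^*h^*e)^{i+1}\bigr)=(h^*e)^{i+1}\cdot\pi_!(1)=0$ since $\pi_!(1)$ lives in negative degree. The class $e^{\text{fw}}(\pi)$ may well be nonzero in $H^d(\B\haut^+_*(X);\Q)$; what kills the $\kappa_i$ is that it is pulled back from the base, not that it vanishes. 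This structural observation about the model in the equal-dimensional case is the missing ingredient, and no representation-theoretic analysis of the $\Gamma$-action is needed.
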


Some further computations of Euler rings can be found in the author's thesis \cite[Sect.\,4.2]{Pri20}. One can also extend the definition of the Euler ring for fibrations with extra structure to obtain, for example, better homotopy theoretic approximations to the smooth tautological ring \cite{Ber20}. Finally, these techniques can be used to infer properties about smooth tautological rings \cite{Pri23}.

\paragraph{Acknowledgements.} This paper is part of my PhD project and I would like to thank my supervisor Oscar Randal-Williams for many enlightening discussions and his help and patience. I would also like to thank Alexander Berglund for many useful discussions and for his help with the revision of this article, and Andrey Lazarev for pointing out modern references for models of the universal fibration. For the revision of this article I have been supported by the Knut and Alice Wallenberg foundation through grant no.\ 2019.0519.

\section{Rational homotopy theory of fibrations}
The classifying space $\Bhaut^+(X)$ is rarely simply connected even if $X$ is, and therefore we cannot immediately apply the results from rational homotopy theory. Instead, for a simply connected Poincar\'e duality space $X$ we study the universal 1-connected fibration 
\begin{equation}\label{UnivXFibr1connected}
 X\hra E\overset{\pi}{\lra}\Bhaut_0(X),
\end{equation}
over the classifying space of the path component of the identity $\haut_0(X)\subset \haut^+(X)$ (or equivalently the induced fibration over the universal covering of $\Bhaut^+(X)$). Many people have studied rational models of \eqref{UnivXFibr1connected} (see \cite{Sul77,Ta83,La14}) and fibrations in general (see \cite{Ha83,FHT}), and we will use these algebraic models to compute the image of the Euler ring
\begin{equation}
E^*_0(X)\subset H^*(\Bhaut_0(X))
\end{equation}
induced by the natural map $\Bhaut_0(X)\ra \Bhaut^+(X)$. We discuss in Section \ref{computations} how one can in some cases upgrade the computation of $E_0(X)$ to a computation of the full Euler ring $E^*(X)$.

\subsection{Rational models of fibrations}\label{ClassicalModels}
From now on we consider cohomology with rational coefficients unless stated otherwise and we use standard terminology from rational homotopy theory as discussed in \cite{FHT}. Throughout this paper, we denote a cdga model of a map of connected spaces $\pi:E\ra B$, typically a fibration with simply connected fibre, by $\pi^*:\B\ra \E$ and we also assume, unless stated otherwise, that cgda models are connected (i.e.\,$R^0=S^0=\Q$). The following cdga models for fibrations enjoy good homotopical properties.

\begin{defn}\cite[Sect.\,14]{FHT}
A \emph{relative Sullivan algebra} is a cdga $(\B\otimes \Lambda V,D)$ so that $\id_{\B}\otimes 1:\B\ra \B\otimes \Lambda V$ is a map of cdga's, and $V=\oplus_{p\geq 1}V^p$ is a graded vector space with an exhaustive filtration $V(0)\subset V(1)\subset\hdots$ of graded subspaces so that $D|_{1\otimes V(0)}:V(0)\ra \B$ and $D|_{1\otimes V(k)}:V(k)\ra \B\otimes \Lambda V(k-1)$. A \emph{relative Sullivan model} of a map of cdgas $\pi^*:\B\ra \E$ is a relative Sullivan algebra  $\E=(\B\otimes \Lambda V,D)$ together with a quasi-isomorphism $\E'\overset{\simeq}{\ra} \E$ of $\B$-algebras.
\end{defn}
By \cite[Prop.\,14.3]{FHT} any map $\pi^*:\B\ra \E$ of connected cdga's admits a relative Sullivan model if $H(\pi^*):H^1(\B)\ra H^1(\E)$ is injective. Since this condition is always satisfied if $\pi^*$ is the model of a fibration $\pi:E\ra B$ with connected fibre, we can always find relative Sullivan models in this case. Moreover, if either the base or fibre has finite type and $\pi_1(B)$ acts trivially on the cohomology of the fibre, then $\E\otimes_{\B}\Q$ is a Sullivan model of the fibre by \cite[Thm 20.3]{Ha83}.

\smallskip
There is a convenient way to obtain relative Sullivan algebras via differential graded Lie algebras. We follow \cite{Ber20} in our convention (which in turn is based on \cite{Ta83}), in defining  Chevalley-Eilenberg complex of a dg Lie algebra $L$ as the dg coalgebra \[\cC^{CE}(L)=(\Lambda^c sL,D=d_1+d_2),\] where $\Lambda^c sL$ denotes the cofree, conilpotent, cocommutative coalgebra on the suspension $(sL)_*=L_{*-1}$ and the differentials are determined by their corestrictions
\begin{align*}
 d_1(sl)&=-sd_L(l), & d_2(sl_1\wedge sl_2)=(-1)^{|l_1|}s[l_1,l_2].
\end{align*}
If a dg Lie algebra acts on a cdga $A$ through derivations, then the Chevalley-Eilenberg cochain complex is
\[\cC_{CE}(L;A):=\big(\Hom(\cC^{CE}(L),A),\partial+t\big),\]
where 
\begin{align}\label{CEdifferential}
\begin{split}
 \partial(f)&=d_A\circ f-(-1)^{|f|}f\circ D\\
 t(f)(sl_1\wedge \hdots \wedge sl_n)&=\sum_{i=1}^n(-1)^{|sl_i|(|f|+|sl_1|+\hdots+|sl_{i-1}|)}l_i\cdot f(sl_1\wedge \hdots \hat{sl_i} \hdots \wedge sl_n),
\end{split}
\end{align}
and $\cC_{CE}(L;A)$ is a cdga via the convolution product. An element $f\in \cC^{CE}(L;A)$ is an \emph{$n$-cochain} if $f(sl_1\wedge\hdots\wedge sl_k)=0$ unless $k=n$ and we identify   $0$-cochains with $A$. Moreover, if $A$ is a Sullivan algebra then $\cC_{CE}(L,A)$ is a relative Sullivan algebra over the Chevalley-Eilenberg cochain complex $\cC_{CE}(L):=\cC_{CE}(L;\Q)$ via the map on Chevalley-Eilenberg cochain complexes induced by the unit $\eta:\Q\ra A$.

\bigskip
With these definitions in place, we can describe a cdga model of \eqref{UnivXFibr1connected}. Let $X$ be a simply connected space of finite type and with minimal Sullivan model $(\Lambda V,d)$. Consider the dg Lie algebra $(\Der^+(\Lambda V),[d,-])$ of positive degree derivations, where a derivation $\theta\in \Der(\Lambda V)$ has degree $n$ if it \emph{lowers}\footnote{This slightly confusing convention is due to using homological grading conventions for dg Lie algebras and cohomological grading conventions for cdga's and can be avoided if one sticks to just one.} the degree by $n$ (additionally, $\theta\in \Der^+(\Lambda V)_1$ only if $[d,\theta]=0$). It was first proved by Sullivan \cite{Sul77} that $\Der^+(\Lambda V)$ is a dg Lie model of $\Bhaut_0(X)$, and one can further describe a model of \eqref{UnivXFibr1connected} as follows.

\begin{thm}\label{modelcdga}
 Let $X$ be a 1-connected space of finite type with minimal Sullivan model $(\Lambda V,d)$ and unit $\eta:\Q\ra \Lambda V$. Then
\begin{equation}\label{modelcdgaeq}
   \cC^*_{CE}(\Der^+(\Lambda V);\Q)\overset{\eta_*}{\lra}\cC^*_{CE}(\Der^+(\Lambda V);\Lambda V)
  \end{equation}
is a relative Sullivan model of the universal oriented 1-connected fibration \eqref{UnivXFibr1connected}. 
 \end{thm}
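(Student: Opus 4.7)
The plan is to obtain Theorem \ref{modelcdga} as the linear dual of Theorem \ref{model} by running through the chain of isomorphisms sketched just before the statement. Since $X$ is of finite type, one can check that $(\Lambda V,d)$ is degreewise finite dimensional, and so is $\cC^{CE}_*(\CoDer^+(\Lambda^cV^\vee);\Lambda^cV^\vee)$ in each degree (using finite generation in each degree of the Sullivan model together with the nilpotence/conilpotence conditions). Consequently, applying $\Hom_{\grVect}(-,\Q)$ to the coalgebra bundle of Theorem \ref{model} produces a cdga model of the same fibration, and it only remains to rewrite this dual in terms of $\Der^+(\Lambda V)$ acting on $\Lambda V$.

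The rewriting proceeds in three steps. First, using the adjunction between $(-)\otimes_{U(L)}C$ and $\Hom_{R\text{-}U(L)}(-,C^\vee)$ on right modules, and the natural identification of right $U(L)$-modules with left $U(L^{op})$-modules, one has for $L=\CoDer^+(\Lambda^cV^\vee)$ and $C=\Lambda^cV^\vee$ an isomorphism of cdga's
\begin{equation*}
\bigl(\cC^{CE}_*(L;C)\bigr)^\vee \;\cong\; \Hom_{L\text{-}U(L^{op})}\bigl(U(sL^{op}\oplus L^{op}),C^\vee\bigr) \;=\; \cC^*_{CE}(L^{op};C^\vee).
\end{equation*}
Second, Lemma \ref{opposite} supplies an isomorphism of dg Lie algebras $L\cong L^{op}$, allowing us to replace $L^{op}$ by $L$ without changing the cohomology. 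Third, the unnamed lemma preceding the statement gives $L^{op}=\CoDer^+(\Lambda^cV^\vee)^{op}\cong \Der^+(\Lambda V)$, under which the $U(L^{op})$-module structure on $C^\vee=\Lambda V$ is the tautological one by evaluation of derivations. Combining these identifications yields
\begin{equation*}
\bigl(\cC^{CE}_*(\CoDer^+(\Lambda^cV^\vee);\Lambda^cV^\vee)\bigr)^\vee \;\cong\; \cC^*_{CE}(\Der^+(\Lambda V);\Lambda V),
\end{equation*}
and similarly for the trivial coefficients on the base.

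Finally, I would verify naturality: the map $\epsilon_*$ induced by the counit $\epsilon:\Lambda^cV^\vee\to\Q$ on the coalgebra side is dual to the map on coefficients induced by the unit $\eta:\Q\to \Lambda V$, so upon dualizing it becomes exactly $\eta_*$ in \eqref{modelcdgaeq}. The main obstacle I expect is bookkeeping: keeping track of the left/right module conventions, the signs in the opposite-Lie-algebra isomorphism, and confirming that the $U(L^{op})$-action on $\Lambda V$ really is the evaluation action of $\Der^+(\Lambda V)$ after all identifications; but none of these steps is substantive beyond what is established in Section 2.3. Once these identifications are assembled, Theorem \ref{model} immediately implies Theorem \ref{modelcdga}.
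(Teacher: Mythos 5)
Your proposal follows the paper's own derivation in Section 2.4: dualize Theorem \ref{model}, apply the tensor--Hom adjunction to rewrite the dual as Lie algebra cochains of $L^{\op}$ with coefficients in $C^\vee$, then identify $L^{\op}=\CoDer^+(\Lambda^c V^\vee)^{\op}\cong\Der^+(\Lambda V)$ acting on $\Lambda V$ by evaluation, and observe that the counit dualizes to $\eta_*$. One claim is inaccurate but immaterial: $\cC^{CE}_*(\CoDer^+(\Lambda^c V^\vee);\Lambda^c V^\vee)$ is not degreewise finite dimensional in general (the paper itself remarks that $\Der^+(\Lambda V)$ is not of finite type), but since $\Hom_{\grVect}(-,\Q)$ is exact over $\Q$ and carries dgc coalgebra models to cdga models without any finiteness hypothesis on the Lie algebra, the dualization step goes through regardless.
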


\begin{rem}\leavevmode
\begin{itemize}
 \item[(i)] In a previous version of this article we proved this result via a comparison of \eqref{modelcdgaeq} with Tanr\'e's model \cite{Ta83} of the universal 1-connected fibration (see also \cite{Pri20}). It has been pointed out to the author by Andrey Lazarev that instead one can derive this result more directly from \cite{La14}. Another proof is due to Alexander Berglund and is based on rational models for the bar construction in terms of Chevalley-Eilenberg complexes \cite{Ber17}. The statement of Theorem \ref{modelcdga} can be found as a special case of \cite[Prop.\,3.6]{Ber20} and we refer to these papers for a proof.
 \item[(ii)] Theorem \ref{modelcdga} turns out to be particularly useful for rationally elliptic spaces because the dg Lie algebra $\Der^+(\Lambda V)$ is finite dimensional in contrast to Tanr\'e's model which is only of finite type. However, if $X$ is rationally hyperbolic then $\Der^+(\Lambda V)$ is not even of finite type and it seems more feasible to study other models of \eqref{UnivXFibr1connected} instead, for example based on a dg Lie algera model of the fibre (see for example \cite{BM14,Sto22}).
\end{itemize}
\end{rem}

\begin{ex}\label{EvenSphereFibrationModel}
Let $X=S^{2n}$ with Sullivan model 
\[A_n=(\Lambda (x,y),|x|=2n,|y|=4n-1,d=x^2\cdot \partial\slash\partial y).\]
Then $\Der^+(A_n)$ is $3$-dimensional with basis $\eta_{2n-1}:=x\cdot \partial\slash\partial y$, $\eta_{2n}:=\partial\slash\partial x$ and $\eta_{4n-1}:={\partial\slash \partial y}$ and differential $[d,\eta_{2n}]=-2\eta_{2n-1}$. Hence, the inclusion of the abelian Lie algebra with trivial differential $\mathfrak{g}:=\Q\{\eta_{4n-1}\}\hra \Der^+(A_n)$ is a quasi-isomorphism of dg Lie algebras and we get a cdga quasi-isomorphism $\cC^*_{CE}(\Der^+(A_n);\Q)\ra \cC^*_{CE}(\mathfrak{g};\Q)=(\Lambda z_{4n},d=0)$ as well as with coefficients in $A_n$. Thus, the cgda model of the universal 1-connected $S^{2n}$-fibration in Theorem \ref{modelcdga} is equivalent to
\begin{align*}
 (\Lambda (z_{4n}),d=0)\lra (\Lambda (z_{4n})\otimes \Lambda (x,y),D(x)=0,D(y)=x^2+z_{4n}),
\end{align*}
where $z_{4n}$ is the 1-cochain dual to $s\eta_{4n-1}$.
\end{ex}

\section{Fibre integration in rational homotopy theory}\label{FibreIntegration}
Before we discuss rational models for fibre integration, we recall the definition suitable for oriented fibrations as a special case of the following construction: Let $\pi:E\ra B$ be a fibration with fibre $X$ and $H^*(X)=0$ for $*>d$ and $H^d(X)$ nontrivial. Given a $\pi_1(B)$-module homomorphism $\phi:\mathcal{H}^d(X)\ra \Q$, we can define \emph{$\phi$-integration} as the composition
\begin{equation}\label{FibreFunction}
\phi_!:H^*(E)\twoheadrightarrow E_{\infty}^{*-d,d}\subset E_2^{*-d,d}=H^{*-d}(B;\mathcal{H}^d(X))\overset{H(\phi)}{\lra} H^{*-d}(B),
\end{equation}
where we project $H^*(E)$ onto the $d$-th row of the $E_{\infty}$-page of the Serre spectral sequence, which is possible since $H^*(X)=0$ for $*>d$, and $E_{\infty}^{*-d,d}\subset E_2^{*-d,d}$ as there are no differentials into this row. 

Note that because the cohomological Serre spectral sequence is compatible with cup product, there is a push-pull identity
\begin{equation}\label{pushpull}
\phi_!(\pi^*(x)\smile y)=x\smile \phi_!(y)
\end{equation}
for $x\in H^*(B)$ and $y\in H^*(E)$, and thus $\phi_!$ is a $H^*(B)$-module map. 

\begin{defn}\label{FibreIntegrationDef}
	Let $\pi:E\ra B$ be an orientable fibration with Poincar\'e fibre $X$ of formal dimension $d$ and let $\varepsilon_X:H^d(X)\ra \Q$ be an orientation of $X$. Then 
	\begin{equation*}
	\pi_!:=(\varepsilon_X)_!:H^*(E)\ra H^{*-d}(B)
	\end{equation*}
	is called \emph{fibre integration} of $\pi:E\ra B$.
\end{defn}

\subsection{Chain level fibre integration}

In the main result of this section we show that there exists a chain level representative of fibre integration. More precisely, given a relative Sullivan model $\E=(\B\otimes\Lambda V,D)$ of an oriented fibration $\pi:E\ra B$ with Poincar\'e fibre $X$, there exists a $\B$-module homomorphism $\Pi:\E\ra\B$ of degree $-d$ which is unique up to homotopy and induces fibre integration on cohomology. This is most conveniently expressed in terms of differential $\Ext$ groups.

\medskip
Let $\B$ be a connected cdga and $M,N$ a differential graded $\B$-modules. Then $\B$-module homomorphisms $\Hom_{\B}(M,N)$ is a $\B$-module with differential $D(f):=d_Nf-(-1)^{|f|}fd_M$. Recall from \cite[Sect.\,7]{FHT} that $M$ is a \emph{semifree $\B$-module} if there is an exhaustive filtration $M(0)\subset M(1)\subset\hdots M$ so that $M(0)$ and $M(k)/M(k-1)$ are free $\B$-modules for all $k\geq 1$. A \emph{semifree resolution} of a $\B$-module $M$ is a semifree $\B$-module $M'$ with a quasi-isomorphism $M'\xrightarrow{\simeq} M$ and we denote by $M\otimes_{\B}^{\mathbb{L}}N$  the derived tensor product of two $R$-modules given by $M'\otimes_RN$ for some semifree resolution $M$. Following \cite{FMT}, the differential Ext groups for $\B$-modules $M,N$ are defined as 
\begin{equation}\label{ExtDef}
 \Ext_{\B}(M,N):=H(\Hom_{\B}(M',N))
\end{equation}
for a semifree resolution $M'\xrightarrow{\simeq} M$.

\begin{prop}\label{ExtModuleMaps}
	Let $\pi:E\ra B$ be a fibration with connected base and total space and 1-connected fibre $X$. Assume that $\pi_1(B)$ acts trivially on $H^*(X)$ and that $H^*(X)$ is of finite type and nontrivial in degree $d$ and vanishes for $*>d$. Let $\pi^*:\B\ra \E$ be a cdga model of $\pi$, then the augmentation induces an isomorphism 
	\begin{equation}\label{PhiIntegration}
	 \Ext_{\B}^{-d}(\E,\B)\overset{\cong}{\lra} \Ext_{\Q}^{-d}(\E\otimes_{\B}^{\mathbb{L}}\Q,\Q)\cong \Hom(H^d(X),\Q),
	\end{equation}
	and given $\phi\in \Hom(H^d(X),\Q)$ the chain level representative $[\Phi]\in\Ext_{\B}^{-d}(\E,\B)$ induces $\phi$-integration.
\end{prop}

\begin{rem}
 Proposition \ref{ExtModuleMaps} generalizes \cite[Thm.A]{FT09} where they identify fibre integration as elements in differential Ext groups for fibrations over Poincar\'e duality spaces and for pullbacks from such fibrations. Moreover, fibre integration can be identified rationally as a map of parametrized suspension spectra $\pi_!:\Sigma_B^{\infty}B_+\ra \Sigma_B^{\infty-d}E_+$, and by \cite[Thm.1.1]{FMT} the set of homotopy classes of such maps is given by differential Ext groups which is consistent with our result.
\end{rem}

\begin{proof}
    Let  $\E'\xrightarrow{\simeq} \E$ be a relative Sullivan model. Then $\Ext^{*}(\E,\B)=H^*(\Hom_{\B}(\E',\B))$ since $\E'$ is a semifree resolution by \cite[Lem.\,14.1]{FHT}. We consider the exhaustive filtration of $\Hom_{\B}(\E',\B)$ given by $F^p=\Hom_{\B}(\E',\B^{\geq p})$. According to \cite[Thm.9.3]{Bo98} the corresponding spectral sequence is conditionally convergent to the completion
	\begin{align*}
	\underset{\longleftarrow}{\lim}_p \Hom_{\B}(\E',\B)\slash \Hom_{\B}(\E',\B^{\geq p}) &\cong  \underset{\longleftarrow}{\lim}_p \Hom_{\B}(\E',\B\slash \B^{\geq p})\\
	&\cong \Hom_{\B}(\E', \underset{\longleftarrow}{\lim}_p\B\slash\B^{\geq p})\\
	&\cong \Hom_{\B}(\E',\B)
	\end{align*}
	where we have used that $\E'$ is a projective $\B$-module for the first isomorphism. The $E_1$-page is $H(\Hom_{\B}(\E',\Q))\otimes B^p$. By the assumption on the fibration it follows from \cite{Ha83} that the Sullivan fibre $\E'\otimes_{\B}\Q=(\Lambda V,d)$ is a cdga model of $X$. Hence, the $E_1$-page can be simplified as $E_1^{p,q}=H^q((\Lambda V)^{\vee})\otimes \B^p\cong \Hom^q(H^*(X),\Q)\otimes \B^p$. Since $\pi_1(B)$ acts trivially, the differential on the $E_1$-page is given by $\id\otimes d_{\B}:E_1^{p,q}\ra E_1^{p+1,q}$ (if $\B$ was simply connected this follows for degree reasons but in general it follows from the fact the cdga model of the universal 1-connected $X$-fibration in Theorem \ref{modelcdga} has a simply connected base and by assumption the relative Sullivan model is obtained by base change along a map $\cC_{CE}(\Der^+(\Lambda V))\ra \B$). Hence the $E_2$-page is $E_2^{p,q}= H^p(\B)\otimes H^q((\Lambda V)^{\vee})$.
	In particular, the spectral sequence vanishes for $q<-d$. Since the gradings are such that the differentials are $d_r:E^{p,q}_r\ra E^{p+r,q-r+1}$, there are only finitely many nontrivial differentials. This implies that the derived $E_\infty$-page is zero and so by \cite[Thm 7.1]{Bo98} the spectral sequence converges strongly, 
	\begin{equation*}
	E_2^{p,q}= H^p(\B)\otimes H^q((\Lambda V)^{\vee})\cong H^p(B)\otimes \Hom^q(H^{*}(X),\Q)\Rightarrow H(\Hom_{\B}(\E',\B)),
	\end{equation*}
	and we can recover $H(\Hom_{\B}(\E',\B))$ from the entries of the $E_{\infty}$-page. The only contribution with total degree $-d$ comes from $E_{\infty}^{0,-d}\cong E_2^{0,-d}\cong \Hom(H^d(X),\Q)$ which proves the first part of the statement.
	
	It remains to show that for $\phi=H^d(\Phi\otimes_{\B} \Q):H^d(\Lambda V,d)\ra \Q$ the induced $\phi$-integration map coincides with $H(\Phi):H^*(\E')\ra H^*(\B)$. First, we note that $\E'=(\B\otimes \Lambda V,D)$ has a filtration $F^p=\B^{\geq p}\otimes \Lambda V$ and that the corresponding spectral sequence converges as $E_2^{p,q}=H^p(\B)\otimes H^q(\Lambda V) \Rightarrow H^{p+q}(\E')$. In fact, $\B$ also has an analogous filtration $G^p=\B^{\geq p}$ with only nontrivial differential on the $E_1$-page. Then $\Phi$ induces a map between these two filtrations and the map on $E_2$-pages is precisely $\phi$-integration defined using this spectral sequence. As we have defined $\phi$-integration using the Serre spectral sequence, it remains to show that this spectral sequence is isomorphic to the Serre spectral sequence. Grivel has shown in \cite{Gr79} that the above filtration gives rise to the Serre spectral squence if the base is simply connected, and this has been generalized by Halperin \cite{Ha83} if $\pi_1(B)$ acts nilpotently on the cohomology of the fibre. More precisely, it follows from the proof of \cite[Thm.6.4]{Gr79} respectively \cite[Sect.\,20]{Ha83} that the comparison map of $\B\ra \E'$ with $A_{PL}(B)\ra A_{PL}(E)$ is compatible with Dress' construction of the Serre spectral sequence \cite{Dr67} and induces an isomorphism on the $E_2$-pages, which concludes the proof.
\end{proof}
We can use Proposition \ref{ExtModuleMaps} to build a representative of fibre integration for an oriented fibration $\pi:E\ra B$ and oriented Poincar\'e fibre $(X,\varepsilon_X)$ as follows: Consider a relative Sullivan model $\pi^*:\B\ra (\B\otimes \Lambda V,D)$. Pick a chain level representative $\varepsilon$ of the orientation $\varepsilon_X\in H^{-d}(\Hom(\Lambda V,\Q))$ of $X$. By Proposition \ref{ExtModuleMaps} there is a cycle $\Pi\in \Hom^{-d}(\E',\B)$ unique up to chain homotopy that satisfies
\begin{equation}\label{normalization}
\Pi(1\otimes \chi)=\varepsilon_X(\chi) \in\B^0=\Q
\end{equation}
for all $\chi \in (\Lambda V)^d$ and that induces fibre integration on cohomology
\begin{equation*}
\pi_!:H^*(E;\Q)\cong H^*(\E')\overset{H(\Pi)}{\lra} H^{*-d}(\B)\cong H^{*-d}(B;\Q).
\end{equation*}
We demonstrate this technique in the following example.

\begin{ex}\label{EvenSphereFibreIntegration}
	Recall the relative Sullivan model of the universal 1-connected fibration for an even dimensional sphere $X=S^{2n}$ as discussed in Example \ref{EvenSphereFibrationModel}. We choose as orientation $\varepsilon_X:A_n\ra \Q$ the homomorphism determined by $\varepsilon_X(x)=1$. For degree reasons $\Pi(y x^k)=0$ and since $\Pi$ has to be a chain map we have $0=\Pi(D(yx^k))=\Pi(x^{k+2}+z_{4n}x^k)$. This determines a $\Lambda(z_{4n})$-module map $\Pi:(\Lambda(z_{4n},x,y),D)\ra (\Lambda(z_{4n}),d=0)$ by 
	\begin{equation*}
	\Pi(yx^k)=0 \qquad \text{and} \qquad \Pi(x^n)=\begin{cases} 0 & n=2k \\ (-1)^kz_{4n}^k & n=2k+1 \end{cases}
	\end{equation*}
	which is a chain map by construction and induces fibre integration on cohomology as it satisfies \eqref{normalization}.
\end{ex}

\section{The fibrewise Euler class}
The definition of the fibrewise Euler class in \cite[Def.\,3.1.1.]{HLLRW17} uses constructions in the category of parametrized spectra. And while there has been a lot of progress to adapt the tools from rational homotopy theory to the context of parametrized stable homotopy theory \cite{BM21,BM20}, there is a simpler way to define the fibrewise Euler class with rational coefficients so that we can avoid discussing the category of parametrized spectra and their rational models altogether.

\smallskip
We begin by describing a special case of the definition of the fibrewise Euler class. Consider an oriented fibration $\pi:E\ra B$ with fibre $X$ so that both $X$ and $B$ are orinted Poincar\'e duality spaces of dimensions $d$ and $b$ respectively. Then the total space is also an oriented Poincar\'e duality space \cite{Go79}, and we can define the Umkehr map of the fibrewise diagonal $\De:E\ra E\times_BE$ by
\begin{equation}
 \De_!:H^*(E)\underset{\cong}{\xrightarrow{D_E}}H_{b+d-*}(E)\xrightarrow{\De_*}H_{b+d-*}(E\times_BE)\underset{\cong}{\xrightarrow{D^{-1}_{E\times_BE}}}H^{*+d}(E\times_BE)
\end{equation}
where $D_E$ and $D_{E\times_BE}$ denote the Poincar\'e duality isomorphisms. It is shown in \cite[Sect.\,3]{HLLRW17} that the fibrewise Euler class of $\pi:E\ra B$ agrees with 
\begin{equation}\label{efwM}
 e^{\fw}(\pi)=\De^*(\De_!(1))\in H^d(E).
\end{equation}
This is sufficient to define the fibrewise Euler class with rational coefficients, because for any space $X$ the rational homology groups are isomorphic to rationalized stable framed bordism $H_*(X;\Q)\cong \Omega^{\mathrm{sfr}}_*(X)\otimes \Q$, and so we can determine a rational cohomology class by defining its evaluation on framed bordism classes. Hence, denoting by $E$ the total space of the universal 1-connected fibration \eqref{UnivXFibr1connected}, given a stably framed bordism class $[f:M^d\ra E,\xi]\in \Omega^{\mathrm{sfr}}_d(E)\otimes \Q$, we can consider the pullback of the $X$-fibration $p_1:E\times_{\Bhaut_0(X)}E\ra E$ along $f$. The pullback $\pi:f^*(E\times_{\Bhaut_0(X)}E)\ra M$ has a section $s$ via the diagonal, and we can associate to it an Euler class $e_f$ via \eqref{efwM}. Then the fibrewise Euler class $e^{\fw}(\pi)\in H^d(E;\Q)$ agrees with the class defined by the pairing $\langle e^{\fw}(\pi),[f:M\ra E,\xi]\rangle =\langle s^*(e_f),[M]\rangle$ since the construction in \cite{HLLRW17} coincides with \eqref{efwM} if the base is a Poincar\'e duality space.

\subsection{A cocycle representative via rational homotopy theory}
The idea to obtain cocycle representatives of the fibrewise Euler class is simply to construct a chain level representative of the Umkehr map $\De_!:H^*(E)\ra H^{*+d}(E\times_BE)$ in terms of the algebraic models. In the following, we denote by $\Q[n]$ the graded vector space with $\Q$ in degree $n$ and for a $\B$-module $M$ define $M[n]:=M\otimes \Q[n]$.
\begin{prop}\label{fwPD}
 Let $\B\ra \E$ be a relative Sullivan model of an oriented fibration $\pi:E \ra B$ with connected base and total space and simply connected Poincar\'e fibre $X$ of formal dimension $d$. Let $\varepsilon_X$ be the orientation and $\Pi\in \Hom_{\B}^{-d}(\E,\B) $ be a cocycle representative of fibre integration. Then the map
 \begin{align*}
 \bar{\Pi}:\E[-d]&\lra \Hom_{\B}(\E,\B)\\
  e&\lmt (e'\mapsto (-1)^{d+d\cdot|e|} \Pi(e\cdot e'))
 \end{align*}
is a quasi-isomorphism of $\B$-modules.
\end{prop}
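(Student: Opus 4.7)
The plan is to show that $\bar\Pi$ is a well-defined $\B$-module chain map and then that it is a quasi-isomorphism by comparing the two sides on the $E_2$-page of the $\B$-degree filtration spectral sequences, where $\bar\Pi$ reduces to Poincar\'e duality on the fibre.

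First I would verify $\B$-linearity: a short Koszul-sign check using $|\Pi|=-d$ and the $\B$-linearity of $\Pi$ yields $\bar\Pi(b\cdot e)=b\cdot \bar\Pi(e)$. Using $Df = d_\B f-(-1)^{|f|}fd_{\E'}$ on $\Hom_\B(\E',\B)$, the chain-map identity $D(\bar\Pi(e))(e')=\bar\Pi(De)(e')$ follows by applying the Leibniz rule to $e\cdot e'$ and $\B$-linearity of $\Pi$; after collecting signs the difference is a scalar multiple of $(D\Pi)(e\cdot e')$, which vanishes because $\Pi$ is a cycle in $\Hom_\B(\E',\B)$.

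For the quasi-isomorphism claim, I would filter both sides by $\B$-degree via $F^p\E'[-d]=\B^{\ge p}\tens \Lambda V$ and $F^p\Hom_\B(\E',\B)=\Hom_\B(\E',\B^{\ge p})$. Repeating the argument of Proposition~\ref{ExtModuleMaps} (simple connectivity of $\B$, together with $H^*(X;\Q)$ being concentrated in degrees $\leq d$, forces the derived $E_\infty$ to vanish) both spectral sequences converge strongly, with
\begin{align*}
E_2^{p,q}(\E'[-d]) &\cong H^p(\B)\tens H^{q+d}(X;\Q),\\
E_2^{p,q}(\Hom_\B(\E',\B)) &\cong H^p(\B)\tens \Hom(H^{-q}(X;\Q),\Q),
\end{align*}
both supported in $-d\le q\le 0$. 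The map $\bar\Pi$ preserves the filtrations, and on representatives $b\tens \chi$ with $\chi\in(\Lambda V)^{q+d}$ one computes, using the normalization $\Pi(1\tens\chi'')=\varepsilon_X(\chi'')$ for $\chi''\in(\Lambda V)^d$ together with $\B$-linearity of $\Pi$, that
\[\bar\Pi(b\tens\chi)(b'\tens \chi')=\pm\,bb'\cdot\varepsilon_X(\chi\chi').\]
Hence on $E_2$ the map $\bar\Pi$ is $\id_{H^*(\B)}$ tensored with the Poincar\'e pairing $[\chi]\mapsto \bigl([\chi']\mapsto \varepsilon_X([\chi\chi'])\bigr)\colon H^{q+d}(X;\Q)\to\Hom(H^{-q}(X;\Q),\Q)$, and this is an isomorphism since $(\Lambda V,d)$ is an oriented Poincar\'e duality algebra with orientation $\varepsilon_X$. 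Strong convergence of the two spectral sequences then yields that $\bar\Pi$ is a quasi-isomorphism.

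The main technical point is identifying $\bar\Pi$ on $E_2$ with the Poincar\'e pairing: once one unpacks the normalization determining $\Pi$ (Proposition~\ref{ExtModuleMaps}) and applies $\B$-linearity on cocycle representatives of the form $b\tens\chi$, the isomorphism statement reduces formally to Poincar\'e duality of the fibre.
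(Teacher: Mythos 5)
Your proposal follows essentially the same route as the paper: check $\B$-linearity and the cycle property directly, then filter both $\E'[-d]$ and $\Hom_\B(\E',\B)$ by $\B$-degree, invoke the convergence argument from Proposition~\ref{ExtModuleMaps}, and identify the induced map on $E_2$-pages with $\id_{H^*(\B)}$ tensored with the Poincar\'e duality isomorphism $\bar\varepsilon_X\colon H^*(X;\Q)\to\Hom(H^{d-*}(X;\Q),\Q)$, which is an isomorphism by the Poincar\'e duality hypothesis on the fibre. This matches the paper's argument in structure and in all essential details.
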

\begin{proof}
It is a simple check that $\bar{\Pi}$ defines a $\B$-module homomorphism. By assumption, $\E=(\B\otimes \Lambda V,D)$ is a relative Sullivan algebra and thus has a filtration which induces the Serre spectral sequence as discussed in the proof of Proposition \ref{ExtModuleMaps}. In the same proof we have described a filtration of $\Hom_{\B}(\E,\B)$ which strongly converges because there is a horizontal vanishing line. The map $\bar{\Pi}$ is compatible with the two filtrations and induces a map of the associated spectral sequences. The induced map on the $E_2$-page is given by
\begin{align*}
E_2^{p,q}=H^p(\B)\otimes H^q(\Lambda V,d)\overset{\Id\otimes \bar{\varepsilon}_X}\lra H^p(\B)\otimes  \Hom(H^{q-d}(\Lambda V),\Q)
\end{align*}
where $\bar{\varepsilon}_X:H^q(\Lambda V)\ra \Hom(H^{d-q}(\Lambda V),\Q)$ is the adjoint of $H^q(\Lambda V)\otimes H^{d-q}(\Lambda V)\overset{\cup}{\ra}H^d(\Lambda V)\overset{\varepsilon_{X}}{\ra} \Q$. Since $(H^*(X;\Q),\varepsilon_X)$ is an oriented Poincar\'e duality algebra, $\bar{\Pi}$ induces an isomorphism of $E_2$-pages.
\end{proof}
This enables us to define an algebraic Umkehr map as follows. Let $\B\ra\E$ be a relative Sullivan model of $\pi:E\ra B$ and $\Pi:\E\ra\B$ a chain level representative of fibre integration. Then $\E\otimes_{\B}\E$ is a Sullivan model of $E\times_BE$, the multiplication $\mu:\E\otimes_{\B}\E\ra \E$ is a model of the fibrewise diagonal $\De:E\ra E\times_BE$ and $\Pi\otimes \Pi:\E\otimes_{\B}\E  \ra \B\otimes_{\B}\B=\B$ is a chain level representative of fibre integration for $E\times_BE$. Since $\E$ is $\B$-semifree, we can find a lift of $\B$-modles
 \begin{equation}\label{UmkehrDiagram}
 \begin{tikzcd}[ampersand replacement=\&]
 \E'[-d]\arrow{d}{\simeq} \arrow[swap]{d}{\bar{\Pi}}\arrow[dashed]{r}{\De_!}\& \E'\otimes_{\B}\E'[-2d] \arrow{d}{\simeq}\arrow[swap]{d}{\overline{\Pi\otimes\Pi}} \\
 \Hom_{\B}(\E',\B)\arrow{r}{\mu^*}  \& \Hom_{\B'}(\E'\otimes_{\B}\E',\B)
 \end{tikzcd}
 \end{equation}
 which is unique up to homotopy and therefore obtain a well defined class
 \begin{equation}\label{efwAlgebraic}
 [\mu(\De_!(1)]\in H^d(\E).
 \end{equation}

\begin{prop}\label{EulerClassDef}
 Let $\pi:E\ra B$ be an oriented fibration with connected base and total space and simply connected Poincar\'e fibre $X$ of formal dimension $d$ so that $\pi_1(B)$ acts trivally on $H^*(X)$. If $\B\ra\E$ is a relative Sullivan model and $\De_!:\E\ra\E\otimes_{\B}\E$ an Umkehr map as in \eqref{UmkehrDiagram}, then $\mu(\De_{!}(1))\in \E^d$ is a representative of the fibrewise Euler class $e^{\fw}(\pi)\in H^d(E)$.
\end{prop}

\begin{proof}
We first observe that the definition of the class in \eqref{efwAlgebraic} is natural with respect to pullbacks: For a map $f:B'\ra B$ with cdga model $\phi:\B\ra \B'$, a model of the pullback $\pi:f^*E\ra B'$ is given by $\B'\otimes_{\B}\E$ and a model on the map of total spaces is given by sending $\Phi(s)=1\otimes s\in \B'\otimes_{\B}\E$ for $s\in \E$ by \cite[Sect.\,20.6]{Ha83}. It follows from Proposition \ref{ExtModuleMaps} that a cocycle representative of fibre integration is given by $\B'\otimes_{\B}\Pi:\B'\otimes_{\B}\E\ra \B'\otimes_{\B}\B\cong \B'$ and therefore $\B'\otimes_{B}\Delta_!$ is a model of the Umkehr map so that \eqref{efwAlgebraic} in this case is given by $[1\otimes \mu(\De_!(1))]=\Phi(\mu(\De_!(1)))\in \B'\otimes_{\B}\E$.

\smallskip
Hence, it suffices to prove that \eqref{efwAlgebraic} coincides with the class defined in \eqref{efwM} for fibrations where the base space is a Poincar\'e duality space (or even just for closed, stably framed manifolds). So let us assume that the base space is a closed manifold $B$ or more generally a Poincar\'e duality space. Then $B\ra *$ is a fibration with Poincar\'e fibre and we can apply the results of the previous section to get a chain level representative of fibre integration map $\Pi_B\in \Hom_{\Q}(\B,\Q)$ corresponding to evaluating a fundamental class. Suppose $\B\ra\E$ is a Sullivan model of the fibration and let $\Pi\in \Hom_{\B}(\E,\B)$ be a model of fibre integration of $\pi$. Then $([B]\cap \dash) \circ\pi_!:H^{b+d}(E)\ra H^{b}(B)\ra \Q$ is an orientation of the Poincar\'e algebra $H^*(E;\Q)$ itself and therefore $\Pi_B\circ \Pi\in \Hom_{\Q}(\E,\Q)$ is a cocycle representative. Define $\Pi_E:=\Pi_B\circ \Pi$ so that
 \begin{equation*}
\begin{tikzcd}[ampersand replacement=\&]
   \Hom_{\B}(\E,\B) \arrow[swap,bend right=15]{rr}{(\Pi_B)_*} \& \E[-d] \arrow[swap]{l}{\simeq}\arrow{l}{\bar{\Pi}}  \arrow{r}{\simeq}\arrow[swap]{r}{\bar{\Pi}_E} \& \Hom_{\Q}(\E),\Q)
\end{tikzcd}
 \end{equation*}
is a commuting diagram, where $(\Pi_B)_*\va=\Pi_B\circ \va $ for $\va\in \Hom_{\B}(\E,\B))$. We can choose chain level representatives of fibre integration of $E\times_BE\ra B $ and $E\times_BE\ra *$ as $\Pi\otimes\Pi:\E\otimes_{\B}\E\ra \B\otimes_{\B}\B\cong \B$ and $\Pi_{E\times_BE}:=\Pi_B\circ (\Pi\otimes\Pi)$ by the same arguments as above. We therefore have a diagram 
\begin{equation*}
 \begin{tikzcd}[ampersand replacement=\&]
  \E[-d] \arrow[bend right=45,shift right=26pt]{dd}{\simeq}\arrow[bend right=45,shift right=26pt,swap]{dd}{\bar{\Pi}_E}\arrow[dashed]{rr}{\De_!} \arrow[swap]{d}{\bar{\Pi}}\arrow{d}{\simeq}\& \& \E\otimes _{\B}\E[-2d] \arrow[bend left=45,shift left=36pt]{dd}{\bar{\Pi}_{E\times_BE}}\arrow[bend left=45,shift left=36pt,swap]{dd}{\simeq} \arrow{d}{\overline{\Pi\otimes \Pi}} \arrow[swap]{d}{\simeq}\\
 \Hom_{\B}(\E,\B)\arrow{rr}{\De^*}\arrow[swap]{d}{(\Pi_B)_*} \& \& \Hom_{\B}(\E\otimes_{\B}\E,\B)\arrow{d}{(\Pi_B)_*}\\
 \Hom_{\Q}(\E,\Q) \arrow{rr}{\De^*} \& \& \Hom_{\Q}(\E\otimes_{\B}\E,\Q)
 \end{tikzcd}
\end{equation*}
where the dashed maps denote the Umkehr map from \eqref{UmkehrDiagram}. The upper square commutes up to homotopy by construction and therefore so does the outer square by commutativity of the lower square. This shows that $\De_!$ is a cochain level representative of the Gysin map and thus $[\mu(\De_!(1))]\in H^d(S)$ agrees with \eqref{efwM}.
\end{proof}

\subsection{The fibrewise Euler class of Leray--Hirsch fibrations}
In the case of fibrations $\pi:E\ra B$ with oriented Poincar\'e fibre $(X,\varepsilon_X)$ which are Leray--Hirsch, i.e.\,where the restriction map $H^*(E)\ra H^*(X)$ is surjective, the definition of fibre integration and the fibrewise Euler class can be simplified significantly. Surjectivity of the restriction map implies that $H^*(E)$ is a free $H^*(B)$-module, and we may denote by $1,e_1,\hdots , e_k\in H^*(E)$ a $H^*(B)$-basis of the cohomology of $E$ that restricts to a basis $1,x_1=i^*(e_1),\hdots,x_k=i^*(e_k)\in H^*(X)$ of the fibre. If $X$ is a Poincar\' e complex of formal dimension $d$, we can order the basis such that $|e_k|=d$ and all other $|e_i|$ have lower degree. Since fibre integration is a $H^*(B)$-module map. It suffices to determine $\pi_!$ on a basis and for degree reasons $\pi_!(e_i)=0 $ for $i<k$. If we set $\pi_!(e_k)=\varepsilon_X(x_k)$ this restricts to the orientation on the fibre hence determines fibre integration as
\begin{equation}\label{fibreinterationLH}
\pi_!\left(\sum_{i=0}^kb_i\cdot e_i\right)=\varepsilon_X(x_k)\cdot b_k
\end{equation}
for $b_i\in H^*(B)$. Since the fibre is Poincar\' e, the (fibrewise) intersection pairing
	\begin{equation}\label{nondegeneracy}
\langle \dash, \dash \rangle :H^*(E)\otimes_{H^*(B)}H^*(E)\xrightarrow{\,\pi_!(\dash\cup\dash)\,} H^*(B)
\end{equation}
is non-degenerate. This enables us to mimic the construction of the Euler class as the dual  of the fibrewise diagonal.
\begin{prop}[\cite{RW16}]\label{LerayHirsch}
 Let $\pi:E\ra B$ be an oriented fibration with Poincar\'e fibre which is Leray--Hirsch. Let $e_0,\hdots,e_k\in H^*(E)$ be an $H^*(B)$-module basis and denote by $e_0^{\#},\hdots,e_k^{\#}\in H^*(E)$ the dual basis under the non-degenerate pairing \eqref{nondegeneracy}. Then the fibrewise Euler class is
 \begin{equation}\label{LHEulerClass}
  e^{\emph{fw}}(\pi)=\sum_{i=0}^k(-1)^{|e_i|}e_ie_i^{\#}\in H^d(E;\Q).
 \end{equation}
\end{prop}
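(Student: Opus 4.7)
The plan is to identify $\Delta_!(1)$ explicitly as a class in $H^d(E\times_B E)$ and then apply $\Delta^*$ to obtain the Euler class by definition. Since $\pi$ is Leray--Hirsch, so is the fibrewise product $\pi\times\pi:E\times_B E\to B$, with fibre $X\times X$ and $H^*(B)$-module basis $\{e_i\otimes e_j\}_{i,j}$. I would begin by writing
\[
\Delta_!(1)=\sum_{i,j}c_{ij}\, e_i\otimes e_j^{\#}\in H^d(E\times_B E),
\]
with unknown coefficients $c_{ij}\in H^*(B)$, reducing the whole statement to the determination of these coefficients.

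The key tool is the characterization of $\Delta_!$ provided by diagram \eqref{UmkehrDiagram}. Because both $\bar{\Pi}$ and $\overline{\Pi\otimes\Pi}$ are quasi-isomorphisms, the cohomology class $\Delta_!(1)$ is uniquely determined by the identity
\[
(\pi\times\pi)_!\bigl(\Delta_!(1)\cdot z\bigr)=\pm\,\pi_!\bigl(\Delta^*(z)\bigr)
\]
for all $z\in H^*(E\times_B E)$, the sign coming from the shifts $[-d]$ and $[-2d]$ in $\bar{\Pi}$ and $\overline{\Pi\otimes\Pi}$. Testing this identity on basis elements $z=e_m\otimes e_n$ and using the Leray--Hirsch formula \eqref{fibreinterationLH} applied to the product fibration, the left-hand side expands into a sum whose general term is proportional to $\pi_!(e_i e_m)\cdot\pi_!(e_j^{\#}e_n)$. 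The defining relation $\pi_!(e_i e_j^{\#})=\delta_{ij}$ of the dual basis collapses this to a single term, and matching against $\pi_!(e_m e_n)$ on the right forces $c_{ij}=(-1)^{|e_i|}\delta_{ij}$. This would establish
\[
\Delta_!(1)=\sum_i(-1)^{|e_i|}\, e_i\otimes e_i^{\#}\in H^d(E\times_B E).
\]

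Finally, since $\Delta^*:H^*(E\times_B E)\to H^*(E)$ is the multiplication of the two tensor factors, applying it to the formula above yields
\[
e^{\text{fw}}(\pi)=\Delta^*\Delta_!(1)=\sum_{i=0}^k(-1)^{|e_i|}\, e_i\,e_i^{\#},
\]
which is the claimed identity.

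I expect the main obstacle to be the systematic bookkeeping of Koszul signs. The shifts $[-d]$ and $[-2d]$ in the defining diagram of $\Delta_!$ introduce $(-1)^{d+d|e|}$ factors into $\bar{\Pi}$ and $\overline{\Pi\otimes\Pi}$; the graded-commutativity sign from swapping tensor factors appears both in the multiplication on $\E'\otimes_{\B}\E'$ and in the dual-basis identity; and the sign $(-1)^{|e_i|}$ in the final formula is precisely what reconciles all of these. No deeper ingredient beyond the Leray--Hirsch structure, the non-degeneracy of the fibrewise pairing, and the characterization of $\Delta_!$ from \eqref{UmkehrDiagram} should be required; the proof is essentially a careful sign-tracked computation implementing the standard diagonal-class construction in the fibrewise Leray--Hirsch setting.
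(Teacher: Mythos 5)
The paper does not give its own proof of this proposition: it is stated as a citation to \cite{RW16}, and Section~4.1 only remarks that the formula arises by ``mimicking the construction of the Euler class as the dual of the fibrewise diagonal.'' Your proposal fills in exactly that construction, and it is essentially the intended argument, carried out inside the algebraic framework of Sections~3--4. The chain of reasoning --- characterize $\Delta_!(1)$ by the identity $(\pi\times\pi)_!(\Delta_!(1)\cdot z)=\pm\,\pi_!(\Delta^*z)$ coming from diagram~\eqref{UmkehrDiagram}, observe that this identity determines $\Delta_!(1)$ uniquely because the fibrewise intersection pairing for $E\times_BE\to B$ is non-degenerate (the fibre $X\times X$ is again Poincar\'e and $\pi\times\pi$ is again Leray--Hirsch), expand $\Delta_!(1)$ in the basis $\{e_i\otimes e_j^{\#}\}$, and collapse via $\pi_!(e_ie_j^{\#})=\delta_{ij}$ --- is correct and complete in outline.

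Two small points to tighten. First, when you say the left-hand side ``collapses to a single term'': after using $\pi_!(e_j^{\#}e_n)=\pm\delta_{jn}$ to kill the $j$-sum, you are still left with $\sum_i c_{in}\,\pi_!(e_ie_m)=\pm\pi_!(e_me_n)$ for all $m$, and it is the non-degeneracy of $\langle\cdot,\cdot\rangle$ applied once more (now in the $e$-variable) that forces $\sum_i c_{in}e_i=\pm e_n$, i.e.\ $c_{in}=\pm\delta_{in}$. You use non-degeneracy twice, not once. Second, be aware that a careful sign-chase shows that the class $\sum_i(-1)^{|e_i|}e_i\otimes e_i^{\#}$ and the class $\Delta_!(1)$ produced by the conventions of \eqref{UmkehrDiagram} may differ by an overall factor $(-1)^d$; this is a matter of which orientation $\varepsilon_{X\times X}$ one normalizes $\Pi\otimes\Pi$ against. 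This global sign is irrelevant for the conclusion, because when $d$ is odd the right-hand side of \eqref{LHEulerClass} already vanishes identically (one sees this by rewriting the sum in the basis $\{e_i^{\#}\}$, whose dual basis is $\{(-1)^{|e_i|(d-|e_i|)}e_i\}$, and noting the resulting factor $(-1)^d$), consistent with the fibrewise Euler class of an odd-dimensional Poincar\'e fibre being rationally zero. So the Euler class formula is correct in all parities, but if you ever need the $\Delta_!(1)$ identity on its own you must pin down this sign.

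As to the comparison with \cite{RW16}: there the statement is proved (for fibre bundles, with $e^{\text{fw}}$ the Euler class of the vertical tangent bundle) by topological means; your argument is purely algebraic and works directly for Hurewicz fibrations with Poincar\'e fibre, which is what the present paper needs. So your route buys generality at no extra cost beyond sign bookkeeping.
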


\begin{ex}\label{EulerClassEvenSphere}
Let $X=S^{2n}$ be an even dimensional sphere and recall the cdga model and fibre integration from the Examples \ref{EvenSphereFibrationModel} and \ref{EvenSphereFibreIntegration}. Then $1$ and $x$ are a $H^*(\Bhaut_0(S^{2n});\Q)$-basis of the cohomology of the total space that restricts to a basis of $H^*(S^{2n})$ on the fibre, i.e. the fibration is Leray-Hirsch. Note that the formula for fibre integration in \eqref{fibreinterationLH} gives the same result as our construction of $\Pi$ in Example \ref{EvenSphereFibreIntegration}. We can apply the above Proposition to find a representative of the fibrewise Euler class. The dual basis with respect to the pairing induced by $\pi_!=H(\Pi)$ is $x^{\#}=1 $ and $1^{\#}=x $ (since $\pi_!(x\cdot 1^{\#})=\pi_!(x^2)=0$), and we find that the fibrewise Euler class is represented by $e^{\text{fw}}(\pi)=2x$.
\end{ex}

\subsection{Fibrations with positively rationally elliptic fibre}

A simply connected space $X$ is called \emph{rationally elliptic} if both $\dim H(X;\Q)$ and $\dim \pi(X)\otimes \Q$ are finite dimensional vector spaces. If moreover $\chi(X)>0$ then $X$ is called \emph{positively rationally elliptic}. Algebraic models for (positively) elliptic spaces are quite rigid. For example, one can show that any positively rationally elliptic space satisfies rational Poincar\'e duality. The main result in this section is a simple, closed formula for the fibrewise Euler class of a fibration with positively rationally elliptic fibre. 

\medskip
The minimal Sullivan model of a rationally elliptic space $\Lambda=(\Lambda V,d)$ is free on a finite dimensional vector space $V$. Hence, the dg Lie algebra model $\Der^+(\Lambda)$ for $\Bhaut_0(X)$ is finite dimensional which makes the study of fibrations with rationally elliptic fibres tractable via Theorem \ref{modelcdga}.
Moreover, a famous conjecture due to Halperin states that any fibration with positively elliptic fibre (and trivial holonomy action) is Leray-Hirsch. This conjecture is known to be true for a large number of examples \cite{Me82,ST87,Th81} and by \cite{Me82} it is equivalent to $\pi_{2i-1}(\Bhaut_0(X))\otimes \Q=0$ for all $i\in \N$. Since $\pi_{2i-1}(\Bhaut_0(X))\otimes \Q=H_{2i}(\Der^+(\Lambda),[d,-])$, this condition can easily be checked in examples. 

\smallskip
We recall a few results about positively rationally elliptic space from \cite[Sect.\,32]{FHT}. A pure Sullivan algebra is a cdga $\Lambda=(\Lambda V,d)$ with $d(V^{\text{even}})=0$ and $d(V^{\text{odd}})\subset \Lambda V^{\text{even}}$, and we denote $P=V^{\text{odd}}$ and $Q=V^{\text{even}}$. Oberseve that pure Sullivan algebras $\Lambda$ are bigraded with additional lower grading given by $\Lambda_k=\Lambda Q\otimes \Lambda^k P$. By \cite[Prop.\,32.10]{FHT} a minimal Sullivan model of a positively rationally elliptic space is isomorphic to to a pure Sullivan algebra with $\dim P=\dim Q$ and so that $d|_{P}$ maps a basis of $P$ to a regular sequence of the graded polynomial ring $\Lambda Q$. In this case, the cohomology is concentrated in lower grading $0$, i.e.\,$H^*(\Lambda)=H^*_0(\Lambda)$.

This bigrading is inherited by the dg Lie algebra $\Der^+_{*,*}(\Lambda)$ where a derivation $\theta$ has bidegree $(m,n)$ if $\theta$ lowers the internal degree by $|m|$ and $\theta(\Lambda Q\otimes \Lambda^k P)\subset \Lambda Q\otimes \Lambda^{k+n} P$.  The following statement has been explained to the author by Alexander Berglund.
 \begin{lem}\label{ModelElliptic}
    Let $\Lambda$ be a pure Sullivan model for a positively rationally elliptic space that satisfies the Halperin conjecture. There exists an abelian dg Lie algebra with trivial differential $\mathfrak{a}\subset \Der_{*,-1}^+(\Lambda)$ that is quasi-isomorphic to $\Der^+(\Lambda)$.
\end{lem}
\begin{proof}
If $\Lambda$ is a pure Sullivan model for a positively rationally elliptic space, the projection map $\psi:\Lambda\ra H(\Lambda)$ is a quasi-isomorphism. It induces a quasi-isomorphism of chain complexes $\Der^+(\Lambda)\lra \Der_{\psi}^+(\Lambda,H(\Lambda))$ (analogous to \cite[Lem.\,3.5]{BM14}). Since the $H(\Lambda)=H_0(\Lambda)$, we see that $H_{*,>0}(\Der^+(\Lambda))$ is contained in the kernel and thus is trivial. The Halperin conjecture implies that $H_*(\Der^+(\Lambda))$ is concentrated in odd degrees and therefore $H_*(\Der^+(\Lambda))\cong H_{\mathrm{odd},-1}(\Der^+\Lambda)$ as claimed. Hence, any choice of representatives in $\Der_{*,-1}^{+}(\Lambda)$ of a basis of $H_*(\Der^+(\Lambda))$ spans an abelian dg Lie subalgebra with trivial differential that is quasi-isomorphic to $\Der^+(\Lambda)$.
\end{proof}
Halperin's conjecture implies that $\Bhaut_0(X)\simeq _{\Q}\prod_{i=1}^kK(\Q,2n_i)$ and so the cohomology of the total space $E$ of \eqref{UnivXFibr1connected} is a free module on $H^*(X;\Q)$ over a positively and evenly graded polynomial ring. Moreover, it follows from Lemma \ref{ModelElliptic} and Theorem \ref{modelcdga} that as a ring $H^*(E)$ is a complete intersection over $H^*(\Bhaut_0(X);\Q)$ (see proof of Theorem \ref{completeIntersection}). Here, by a \emph{complete intersection} over a commutative ring $\B$ we mean a finite $\B$-algebra $\E$ that is isomorphic to $\B[x_1,\hdots,x_n]\slash (f_1,\hdots,f_n)$ for $f_1,\hdots,f_n\in \B[x_1,\hdots,x_n]$ (see \cite{SL97}).

For a finite $\B$-algebra $\E$, we can define the trace of an endomorphism $\Hom_{\B}(\E,\E)\cong \E\otimes_{\B}\Hom_{\B}(\E,\B)$ via the evaluation $\E\otimes_{\B}\Hom_{\B}(\E,\B)\ra \B$. In particular, we can associate to any $s\in \E$ the trace of the endomorphism $s\cdot -:\E\ra \E$ and we obtain an element $\Tr_{\E\slash \B}\in\Hom_{\B}(\E,\B)$. We will use the following result about complete intersections.

\begin{prop}[\cite{SL97}]\label{PropComplInt}
 Let $\B$ be a commutative ring and $f_1,\hdots,f_n\in \B[x_1,\hdots,x_n]$ for a non-negative integer $n$. Assume that $\E=\B[x_1\hdots,x_n]\slash (f_1,\hdots,f_n)$ is a finite $\B$-algebra. Then
 \begin{itemize}
  \item[(i)] $\E$ is a projective $\B$-module\footnote{In our applications, $\B$ is a positively graded polynomial ring so that $\E$ is in fact free.};
  \item[(ii)] $\Hom_{\B}(\E,\B)$ is a free of rank $1$ as $\E$-module;
  \item[(iii)] there is a generator $\lambda$ of $\Hom_{\B}(\E,\B)$ as an $\E$-module such that $\Tr_{\E\slash\B}=\det(\partial f_i\slash\partial x_j)\cdot \lambda$.
 \end{itemize}
\end{prop}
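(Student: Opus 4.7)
The plan is to follow Scheja and Storch's classical argument. The central object is the element $\Delta \in R \otimes_B R$, where $R := B[x_1,\ldots,x_n]$, defined as follows: by a telescoping decomposition we may write
\[ f_i(x) - f_i(y) = \sum_{j=1}^n a_{ij}(x,y)\,(x_j - y_j) \]
for some choice of $a_{ij} \in R \otimes_B R$, and then set $\Delta := \det(a_{ij})$. Let $\bar\Delta \in E \otimes_B E$ denote its image; this is well-defined modulo the ideal $(f_i(x), f_i(y))$ independently of the chosen $a_{ij}$, and along the diagonal $y=x$ reduces to the Jacobian $J := \det(\partial f_i/\partial x_j) \in E$. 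In particular, the multiplication map $\mu: E \otimes_B E \to E$ satisfies $\mu(\bar\Delta) = J$.

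For (i), I would first show that $f_1,\ldots,f_n$ is a regular sequence in $R$: this follows because $E$ is finite over $B$, which forces the relative Krull dimension to drop from $n$ to $0$. Consequently the Koszul complex $K_\bullet(f_1,\ldots,f_n;R)$ is a free $R$-resolution of $E$. In the graded setting of the paper $B$ is a positively graded polynomial ring and $E$ is a finitely generated graded Cohen--Macaulay $B$-module of maximal dimension, hence free; more generally, local flatness criteria applied to the Koszul resolution give projectivity over $B$.

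The heart of the argument is (ii) and (iii). Writing $\bar\Delta = \sum_k p_k \otimes q_k$, the crucial assertion is the existence and uniqueness of $\lambda \in \Hom_B(E,B)$ such that
\[ e = \sum_k \lambda(e\,p_k)\,q_k \quad \text{for every } e \in E; \]
that is, $\{p_k\}$ and $\{q_k\}$ form dual $B$-bases of $E$ with respect to the pairing $(a,b) \mapsto \lambda(ab)$, making $(E,\lambda)$ a Frobenius algebra over $B$ with Casimir $\bar\Delta$. The map $E \to \Hom_B(E,B)$, $e \mapsto \lambda(e \cdot -)$, is then an $E$-module isomorphism, so $\lambda$ freely generates $\Hom_B(E,B)$ as an $E$-module, which is (ii). For (iii), one expands the trace of multiplication by $e$ in the dual bases:
\[ \Tr_{E/B}(e) = \sum_k \lambda(p_k\,e\,q_k) = \lambda\!\left(e \cdot \sum_k p_k q_k\right) = \lambda(e\cdot J), \]
using the commutativity of $E$ together with $\mu(\bar\Delta) = J$. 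Hence $\Tr_{E/B} = J\cdot \lambda$, as desired.

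The main obstacle is producing $\lambda$ and verifying the Frobenius duality. I would identify $\Hom_B(E,B) \cong \Ext^n_R(E,R)$ via the Koszul resolution of $E$ and then match $\bar\Delta \in E \otimes_B E$ with the tautological element of $\End_B(E) \cong E \otimes_B \Hom_B(E,B)$ under this identification. Equivalently, one checks directly from the Koszul complex that the $E$-module map $\Hom_B(E,B) \to E$ sending $\mu \mapsto \sum_k \mu(p_k)\,q_k$ is surjective, and concludes by a rank count using (i) that it is an isomorphism; the preimage of $1 \in E$ is the generator $\lambda$ we seek.
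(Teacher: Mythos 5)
The paper does not prove Proposition~\ref{PropComplInt}; it cites it directly from Scheja--Storch \cite{SL97}, so there is no internal proof for you to match. Your outline does reproduce the Scheja--Storch strategy faithfully: the Bezoutian $\Delta=\det(a_{ij})$, its image $\bar\Delta\in E\otimes_B E$, the specialization $\mu(\bar\Delta)=J$, the Koszul resolution giving (i), and the Frobenius/Casimir structure giving (ii) and (iii). The trace computation $\Tr_{E/B}(e)=\lambda(eJ)$ from the dual-basis expansion is also correct.

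That said, you have honestly flagged the real gap, and it is worth naming precisely. Two lemmas are doing all the work and neither is verified. First, the \emph{symmetry} of the Bezoutian: $(e\otimes 1)\bar\Delta=(1\otimes e)\bar\Delta$ in $E\otimes_B E$ for every $e\in E$. This is needed both for the well-definedness claim (independence of the choice of $a_{ij}$) and for the step where you assert that $\mu\mapsto\sum_k\mu(p_k)q_k$ is an $E$-module map; without symmetry the two $E$-actions on $\bar\Delta$ do not agree and the argument stalls. The usual proof factors $f_i(x)-f_i(y)$ into the generators $x_j-y_j$ of $\ker\mu$ and uses that $\Delta$ times any $x_j-y_j$ is a linear combination of the $f_i(x)$ and $f_i(y)$ via Cramer's rule. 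Second, the existence and uniqueness of $\lambda$ with $e=\sum_k\lambda(ep_k)q_k$ is not a consequence of the dual-basis formalism but is the content of the Scheja--Storch duality theorem; your suggested route via $\Ext^n_R(E,R)$ and the Koszul complex is the right one, but the identification $\Hom_B(E,B)\cong\Ext^n_R(E,R)$ and the matching of $\bar\Delta$ with the tautological element of $E\otimes_B\Hom_B(E,B)$ requires the change-of-rings/local duality machinery to be set up with care, especially since $B$ here is only assumed commutative (not Noetherian local). If you invoke projectivity from (i) to localize and reduce to the local case, say so explicitly. As a sketch of the published argument the proposal is sound; as a self-contained proof it still owes the two lemmas above.
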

We recognize (iii) as an analogue for complete intersections of the relation between fibre integration, the fibrewise Euler class and the Becker-Gottlieb transfer $\tau_{\pi}:\Sigma^{\infty}B_+\ra \Sigma^{\infty}E_+$ for fibrations with Poincar\'e fibres. The transfer map induces a map on cohomology $\text{trf}^*_{\pi}:H^*(E)\ra H^*(B)$ and if the fibre is a Poincar\'e complex one can show that
\begin{equation*}
 \text{trf}^*_{\pi}(x)=\pi_!(e^{\fw}(\pi)\cdot x)
\end{equation*}
 for all $x\in H^*(E)$. If we consider the universal 1-connected fibration \eqref{UnivXFibr1connected} of a positively rationally elliptic space that satisfies the Halperin conjecture, its algebraic model $\B\ra \E$ is equivalent to a complete intersection over a polynomial ring and we can identify the Becker-Gottlieb transfer and fibre integration with the trace $\mathrm{Tr}_{\E/\B}$ and $\lambda$ respectively, which therefore leads to an identification of the fibrewise Euler class.
\begin{thm}\label{completeIntersection}
Let $X$ be a simply connected, oriented Poincar\'e duality space that is positively rationally elliptic and satisfies the Halperin conjecture. Then a cdga model of \eqref{UnivXFibr1connected} is given by a complete intersection $\E=\B[x_1,\hdots,x_n]/(f_1,\hdots,f_n)$ over a polynomial ring $\B$ and the fibrewise Euler class is given by 
 \begin{equation}\label{efwCI}
  e^{\emph{fw}}(\pi)=\det\left(\frac{\partial f_i}{\partial x_j}\right)\in \E.
 \end{equation}
\end{thm}
\begin{rem}
  The first part of the theorem improves a result by Kuribayashi \cite[Thm 1.1]{Ku10} where he showed that the cohomology ring of the total space of the universal 1-connected fibration \eqref{UnivXFibr1connected} for positively rationally elliptic spaces that satisfy the Halperin conjecture is a complete intersection over the cohomology of the base. 
\end{rem}
\begin{proof}
  Let $\Lambda=(\Q[x_1,\hdots,x_n]\otimes \Lambda(y_1,\hdots,y_n),d)$ be a pure Sullivan model of $X$ with $d(y_i)=\bar{f}_i\in \Q[x_1,\hdots,x_n]$ where $\bar{f}_1,\hdots,\bar{f}_n$ is a regular sequence. The cohomology ring $H(\Lambda)$ is a Poincar\'e duality algebra and $\det (\partial \bar{f}_i\slash \partial x_j)\in H^*(X)$ is a generator in top degree \cite{Mu93,ST87}, so that the orientation is defined by $\varepsilon_X(\det (\partial \bar{f}_i\slash \partial x_j)):=\chi(X)$. Now let $\mathfrak{a}\subset \Der^+_{*,-1}(\Lambda)$ be an abelian dg Lie subalgebra quasi-isomorphic to $\Der^+(\Lambda)$ from Lemma \ref{ModelElliptic}. Then a model of the universal 1-connected fibration is given by
  \[\B:=\cC_{CE}(\mathfrak{a})\ra \cC_{CE}(\mathfrak{a};\Lambda)\cong (\B\otimes \Lambda(x_i,y_i),D)
  \]
  by Theorem \ref{modelcdga}. And because $\mathfrak{a}\subset \Der^+_{*,-1}(\Lambda)$, we see that $D(x_i)=0$ and that $D(y_i)=f_i\in \B[x_1,\hdots,x_n]$ are polynomials that satisfy $\bar{f}_i=f_i\otimes_{\B}1\in \B[x_1,\hdots,x_n]\otimes_{\B}\Q$. This implies that $\E:=\B[x_1,\hdots,x_n]/(f_1,\hdots,f_n)$ is a complete intersection over $\B$ and the projection map $\cC_{CE}(\mathfrak{a};\Lambda)\ra \E$ is a quasi-isomorphism.

 One can show that the trace $\mathrm{Tr}_{\E\slash\B}$ and the transfer $\mathrm{tr}_{\pi}^*$ agree using the general theory in \cite{DP80} or by directly checking that 
 \begin{equation}\label{lambda=pi}
 \Tr_{\E\slash \B}( x)=\pi_!(e^{\text{fw}}(\pi)\cdot x),
 \end{equation}
 as has been done in \cite[Lemma 2.3]{RW16} using that $\E$ is a finite free $\B$-algebra. Now consider $\lambda$ from Proposition \ref{PropComplInt}, then $\lambda(\det(\partial f_i\slash \partial x_j) )=\mathrm{Tr}_{\E\slash\B}(1)=\chi(X)$ by \eqref{lambda=pi} which agrees with $\pi_!(\det(\partial f_i\slash \partial x_j))=\varepsilon_X(\det(\partial \bar{f}_i\slash \partial x_j))$. Therefore, it follows from degree reasons that $\pi_!=\lambda$ and we obtain \eqref{efwCI} from the identity 
 \begin{equation*}
\det(\partial f_i\slash\partial x_j)\cdot \pi_!=\det(\partial f_i\slash\partial x_j)\cdot \lambda =\Tr_{E\slash B}=\text{trf}_{\pi}^*=e^{\text{fw}}(\pi)\cdot \pi_!
\end{equation*}
since $\Hom_{\B}(\E,\B)$ is a free $\E$-module by (ii).
\end{proof}

\section{Computations}\label{computations}

Before we come to the computation of the Euler ring, we record a few general facts. First we observe that in some cases it is sufficient to compute the Euler ring of the universal 1-connected fibration $E_0(X)\subset H^*(\Bhaut_0(X))$. 
\begin{lem}\label{finiteE}
 If $\pi_0(\haut^+(X))$ is finite, then $i:\Bhaut_0(X)\ra \Bhaut^+(X)$ induces an isomorphism $E^*(X)\cong E_0^*(X)$.
\end{lem}
\begin{proof}
 By naturality of the fibrewise Euler class, $i:\Bhaut_0(X)\ra\Bhaut^+(X)$ induces a surjection of Euler rings $E^*(X)\twoheadrightarrow E_0^*(X)$. It follows from the the spectral sequence associated to the fibre sequence 
 \[\Bhaut_0(X)\lra \Bhaut^+(X)\lra \mathrm{B}\pi_0(\haut^+(X))\]
 that $H^*(\Bhaut(X);\Q)\cong H^*(\Bhaut_0(X);\Q)^{\pi_0(\haut^+(X))}$ if $\pi_0(\haut^+(X))$ is finite. In particular, $i$ induces an injection on rational cohomology.
\end{proof}
For a positively rationally elliptic space there is a simple criterion to check if it has finitely many homotopy automorphisms.
\begin{prop}\label{finite}
	Let $X$ be a simply connected Poincar\'e duality space that is positively rationally elliptic space. If the cohomology ring $H^*(X;\Q)$ has finitely many orientation preserving automorphisms (i.e.\,algebra automorphisms that preserve the orientation $\varepsilon_X\in H^d(X;\Q)^{\vee}$), then $\pi_0(\haut^+(X))$ is finite.
\end{prop}
\begin{proof}
The statement follows if the kernel of $\mathcal{E}(X)\ra \Aut(H^*(X;\Q),\varepsilon_X)$ is finite. Rationalization induces a map $\mathcal{E}(X)\ra \mathcal{E}(X_{\Q})$ which has finite kernel \cite[Thm 10.2]{Sul77}, so that it suffices to prove that the subgroup of $\mathcal{E}(X_{\Q})$ of homotopy equivalences that induce the identity on cohomology is trivial. By \cite[10.3]{Sul77}, homotopy automorphism of $X_{\Q}$ are the same as homotopy classes of automorphisms of a minimal Sullivan model. 	
	
For a positively rationally elliptic space we can choose as a model $\Lambda=(\Lambda(x_i,y_i)_{i=1,\hdots,n},d=\sum_if_i\partial y_i)$ where $f_1,\hdots,f_n\in \Lambda(x_i)_{i=1,\hdots,n})$ is a regular sequence. Let $\phi:\Lambda\ra \Lambda$ be an automorphism so that $H(\phi)=\Id$. Then $[x_i-\phi(x_i)]=0$ and we can pick a coboundary $\xi_i\in \Lambda_{>0}$ satisfying $d\xi_i=x_i-\phi(x_i)\in (f_1,\hdots,f_k)+\Lambda_{>0}$. We want to define a homotopy $H:\Lambda\ra \Lambda\otimes \Lambda(t,dt)$ by setting $H(x_i)=\phi(x_i)+t(x_i-\phi(x_i))-\xi_idt$. Then $dH(x_i)=0=Hd(x_i)$ and $\varepsilon_1\circ H(x_i)=x_i$ and $\varepsilon_0 \circ H(x_i)=\phi(x_i)$. It remains to define $H(y_i)$, i.e.\,we have to find a coboundary for $f_i(H(x_1),\hdots,H(x_n))\in \Lambda\otimes \Lambda(t,dt)$. Observe that $\Lambda\otimes \Lambda(t,dt)$ is a positively elliptic Sullivan algebra as well since $f_1,\hdots,f_n,t$ is a regular sequence in $\Q[x_1,\hdots,x_n,t]$, and that $f_i(H(x_1),\hdots,H(x_n))-d\phi(y_i)\in (\Lambda\otimes \Lambda^+(t,dt)) \oplus (\Lambda\otimes \Lambda(t,dt))_{>0}$. Since both summands are acyclic, there is $\zeta_i\in \Lambda\otimes \Lambda(t,dt)$ so that $d\zeta_i=f_i(H(x_1),\hdots,H(x_n))-d\phi(y_i)$, and we can set $H(y_i):=\zeta_i+\phi(y_i)$. This shows that $\phi$ is homotopic to a map which is the identity on $\Lambda(x_i)$. 

Given an automorphism $\psi:\Lambda\ra\Lambda$ with $\psi(x_i)=x_i$, then $d\psi(y_i)=\psi(f_i)=f_i=d(y_i)$ and hence $y_i-\psi(y_i)$ is a cocylce in $\Lambda_{>0}$. Let $\xi_i$ be a coboundary $d\xi_i=y_i-\psi(y_i)$, then $\psi\simeq_H \Id_{\Lambda}$ via $H(x_i):=x_i$ and $H(y_i):=\psi(y_i)+t(y_i-\psi(y_i))-\xi_idt$.
\end{proof}
\begin{cor}\label{RationalProjective}
	Let $X$ be a simply connected Poincar\'e duality space with $H^*(X;\Q)\cong \Q[x]/(x^{n+1})$ for some $n$ and $|x|$ even, then $\pi_0(\haut^+(X))$ is finite.  
\end{cor}
\begin{proof}
 For any choice of orientation of $X$, the group  $\Aut(H^*(X;\Q),\varepsilon_{X})$ is trivial if $n$ is odd and $\Z/2$ if $n$ is even. Hence, $\pi_0(\haut^+(X))$ is finite by Proposition \ref{finite}
\end{proof}

Finally, in some cases the Euler ring is finitely generated so that the computation of the Euler ring amounts to computing the ideal of relations among a generatoring set, and which simplifies some computations below.
\begin{prop}\label{FiniteGeneration}
 Let $X$ be a Poincar\'e duality space so that $H^*(X)$ is concentrated in even degrees and let $n=\dim H^*(X;\Q)$. Then $E^*(X)$ is generated by $\kappa_1,\hdots,\kappa_{n-2},\kappa_{n}$.
\end{prop}
\begin{proof}
This follows directly from the tools developed in \cite{RW16}. Applying \cite[Cor.\,2.7]{RW16} for $x=e^{\fw}(\pi)$, the corresponding monic polynomial $\rho_x(z)\in H^*(\Bhaut^+(X);\Q)[z]$ of degree $n$ is given by
\[\rho_x(z)=\frac{(-1)^n}{n!}\sum_{\sigma\in \Sigma_{n+1}} \mathrm{sgn}(\sigma) \kappa_{l(\gamma_2)}\cdot \dots\cdot \kappa_{l(\gamma_{q(\sigma)})} \cdot z^{l(\gamma_1)-1},\]
where $\sigma=\gamma_1\cdot \dots\cdot \gamma_{q(\sigma)}$ is the cycle decomposition of $\sigma$, $l(\gamma_i)$ denotes the length of $\gamma_i$ and $1$ is contained in the support of $\gamma_1$. Then \cite[Cor.\,2.7]{RW16} implies that $\rho_x(e^{fw}(\pi))=0\in H^{*}(E)$ and by fibre integrating $0=e^{\fw}(\pi)^i\cdot \rho(e^{\fw}(\pi))$, one can decompose $\kappa_{n+i}$ in terms of $\kappa$-classes of lower degree for $i\geq 0$; except when $i=1$ as $\rho_x(z)$ has a constant term$-\kappa_n/n$ which cancels the leading term $\pi_!(e^{\fw}(\pi)^{n+1})$ in $\pi_!(e^{\fw}(\pi)\rho_x(e^{\fw}(\pi)))$.
\end{proof}

\subsection{The Euler ring of even spheres}

\begin{prop}
 The Euler ring of an even dimensional sphere is $E^*(S^{2n})\cong \Q[\kappa_2]$ where $\kappa_{2}^k=2^{k-1}\kappa_{2k}$ and all odd $\kappa_{2i+1}$ vanish.
\end{prop}
\begin{proof}
 We have seen in Example \ref{EulerClassEvenSphere} that $e^{\text{fw}}(\pi)=2x\in \Lambda(x,y,z_{4n})$ and in Example \ref{EvenSphereFibreIntegration} that fibre integration is given by $\Pi(x^{2k})=0$ and $\Pi(x^{2k+1})=(-1)^kz_{4n}^k$. Thus $\kappa_{2k}=2^{1-k}(-2^3z_{4n})^k=2^{1-k}\kappa_2^k$. Since $\pi_0(\haut^+(S^{2n}))$ is trivial, the 1-connected universal fibration is the universal fibration and the result follows.
\end{proof}

\subsection{The Euler ring of complex projective space}
A minimal model of $\C P^n$ is 
\[P_n:=(\Lambda(x,y),|x|=2,|y|=2n+1,d=x^{n+1}\partial\slash\partial y)\]
with orientation $\varepsilon_{\C P^n}(x^n)=1$ induced by integral Poincar\'e duality, and we apply Theorem \ref{modelcdga} to compute a model for the minimal 1-connected $\C P^n$-fibration. 
\begin{prop}\label{ModelProjective}
 A cdga model of the universal 1-connected $\C P^n$-fibration is given by
 \begin{align}\label{cdgaModelProjective}
  B_n:=\big(\Q[x_2,\hdots,x_{n+1}],|x_i|=2i,d=0\big)\overset{\pi}{\lra} E_n:=\big(B_n[x]\slash (x^{n+1}+\mbox{\small $\sum_{i=2}^{n+1}$} x_i\cdot x^{n+1-i}),|x|=2,d=0\big),
 \end{align}
 and the fibrewise Euler class in $E_n$ is represented by 
 \begin{equation}\label{EulerClassProjective}
e^{\emph{fw}}(\pi)=(n+1)\cdot x^n+\sum_{i=2}^{n}(n+1-i)\cdot x_i\cdot x^{n-i} \in E_n.
 \end{equation}
\end{prop}
\begin{proof}
Note that $\Der^+(P_n)$ has a (vector space) basis given by $\theta_i:=x^{n+1-i}\partial\slash \partial y$ for $i=1,\hdots, n+1$ of degree $2i-1$ and $\eta:=\partial\slash\partial x$ of degree $2$. The only non-trivial differential on the derivation Lie algebra is given by $[d,\eta]=-(n+1)\theta_1$. Since $\C P^n$ satisfies the Halperin conjecture, it follows from Lemma \ref{ModelElliptic} that there exists a quasi-isomorphic abelian dg Lie subalgebra $\mathfrak{a}_n\subset \Der^+(P_n)$ with trivial differential, which in this case is easy to identify as $\mathfrak{a}_n:=\Q\{\theta_2,\hdots,\theta_{n+1}\}$. The statement follows directly from Theorem \ref{completeIntersection} for this choice of $\mathfrak{a}_n$.
\end{proof}

\begin{thm}\label{EulerRingProjective}
	The Euler ring of complex projective space is $E^*(\C P^n)\cong \Q[\kappa_1,\hdots,\kappa_{n-1},\kappa_{n+1}]$. 
\end{thm}
\begin{proof}
	By Example \ref{RationalProjective}, $\pi_0(\haut^+(\C P^n)$ is finite (in fact one can easily see that $\pi_0(\haut(\C P^n))\cong\Z/2$), and so $E^*(\C P^n)\cong E^*_0(\C P^n)$ by Lemma \ref{finiteE}. The Euler ring is generated by $\kappa_1,\hdots,\kappa_{n-1},\kappa_{n+1}$ by Proposition \ref{FiniteGeneration}, so it remains to show that they are algebraically independent which follows if $\det (\partial \kappa_i\slash \partial x_j)$ is non-zero. 
	
	It turns out that the polynomials representing the $\kappa_i$ are quite complicated so that it is difficult to give a closed formula for the determinant of the Jacobian. We will resolve this issue by focussing on the terms containing $x_{n+1}$ because it is the variable of the highest degree and it is not contained in  $e^{\text{fw}}(\pi)$ so that it only arises through fibre integrating $x^{k}$ for $k>n$. It will be sufficient to consider elements modulo decomposables, i.e. for $x,y\in B_n$ then $x\sim y$ if $x-y\in (B_n^+)^2$. We will start with the following observation about fibre integration.
	 
	\begin{itemize}
		\item[\textbf{1.}] If $k=2,\hdots,n+1$ then $\pi_!(x^{n+k})\sim- x_k\in B_n$.
	\end{itemize}
	\noindent
	 \emph{Proof. }Rewriting $x^{n+2}$ in terms of the module basis $\{1,x,\hdots,x^n\}$, one can see that ${\pi_!(x^{n+2})=x_2}$. Then $\pi_!(x^{n+k})=\pi_!(x^{n+1}\cdot x^{k-1})=-\sum_{i=2}^{n+1}x_i\cdot \pi_!(x^{n+k-i})$ and by induction over $k$, the only indecomposable contribution is for $i=k$.\hfill $\square$
	 
	\begin{itemize}
		\item[\textbf{2.}] For $i=1,\hdots,n-1$ the highest power of $x_{n+1}$ in $\kappa_i(x_2,\hdots,x_{n+1})$ is $i-1$ and the coefficient $c_i\in \Q[x_2,\hdots,x_n]$ of $x_{n+1}^{i-1}$ satisfies $c_i\sim (-1)^{i}i(n+1)^i(n-i)\cdot x_{n+1-i}$.
	\end{itemize} 
	\noindent
	\emph{Proof.} It follows from degree considerations that the highest power of $x_{n+1}$ is $i-1$ and $c_i=A\cdot x_{n+1-i}+\text{decomposables}$. It remains to determine the coefficient $A$. When expanding $e^{\text{fw}}(\pi)$ using \eqref{EulerClassProjective}, the only relevant contributions are
	\begin{align*}
	&(n+1)^{i+1}x^{n(i+1)}-(i+1)(n+1-(n+1-i))x_{n+1-i}x^{n-(n+1-i)}\cdot (n+1)^i x^{ni}\\
	=&(n+1)^{i+1}(x^{n+1})^{i-1}\cdot x^{2n-i+1}-i(i+1)(n+1)^ix_{n+1-i}\cdot(x^{n+1})^{i-1}\cdot x^n
	\end{align*}
	Now rewrite $x^{n+1}=\sum_{i=2}^{n+1}x_ix^{n+1-i}$ and collect all terms containing $x_{n+1}^{i-1}$ and $x_{n+1}^{i-2}x_{n+1-i}$ (we can ignore the rest because it cannot contribute to A) to get
	\begin{align*}
	(n+1)^{i+1}(x_{n+1}^{i-1}+(i-1)x_{n+1}^{i-2}x_{n+1-i}\cdot x^i)\cdot x^{2n-i+1}-i(i+1)(n+1)^ix_{n+1-i}\cdot x_{n+1}^{i-1}\cdot x^n
	\end{align*}
	The statement follows by fibre integrating and discarding decomposables as in \textbf{1} above. \hfill $\square$
		\begin{itemize}
		\item[\textbf{3.}] The highest contribution of $x_{n+1}$ in $\kappa_{n+1}$ is $(-1)^{n}(n+1)^{n+2}\cdot x_{n+1}^n$.
	\end{itemize} 
	\noindent
	\emph{Proof.} The expression for $e^{\text{fw}}(\pi)^{n+2}$ contains the summand $(n+1)^{n+2}\cdot x^{n(n+2)}=(n+1)^{n+2}\cdot(x^{n+1})^n\cdot x^n$. This is the only summand that fibre integrates to a multiple of $x_{n+1}^n$, i.e. $\kappa_{n+1}=(-1)^{n}(n+1)^{n+2}\cdot x_{n+1}^n+\hdots$ where we can ignore all other terms. \hfill $\square$
	
	\medskip
	\noindent
	We can now analyze $\det(\partial \kappa_i\slash\partial x_j)$ which contains the summand
	\begin{equation*}
	\frac{\partial \kappa_1}{\partial x_n}\cdot \frac{\partial \kappa_2}{\partial x_{n-1}}\cdot\hdots\cdot\frac{\partial \kappa_{n-1}}{\partial x_2}\cdot \frac{\partial \kappa_{n+1}}{\partial x_{n+1}}.
	\end{equation*}
	It follows from \textbf{2} and \textbf{3} that the above expression contains $C\cdot x_{n+1}^N$, where $C$ is a non-zero constant and $N=\half n(n-1)$. This is the only possible way to get a monomial in $\text{det}\,\big( \frac{\partial \kappa_i}{\partial x_j}\big)$ that contains only $x_{n+1}$. Hence, the determinant does not vanish and the generating set $\kappa_1,\hdots,\kappa_{n-1},\kappa_{n+1}$ is algebraically independent.
\end{proof}

\begin{rem}
 Theorem \ref{EulerRingProjective} has been studied in the smooth case for $n=2$ in \cite{RW16} by studying the natural smooth $2$-torus action on $\C P^2$ and that implies our result in this case as well. This has been extended by Dexter Chua to $n\leq 4$, but for large $n$ the algebra becomes intractable.
\end{rem}

\subsection{The Euler ring of products of odd spheres}
The main result of this section is the computation of $E_0^*(X)$ for a simply connected Poincar\'e duality space $X$ that is rationally equivalent to a product of odd spheres $\prod_{i=1}^{2n}S^{2k_i+1}$ for some $n, k_i>0$. 
\begin{thm}\label{EulerRingOddSpheres}
 Let $X$ be a simply connected Poincar\'e duality space that is rationally equivalent to a product of odd spheres $\prod_{i=1}^{2n}S^{2k_i+1}$ for some $n, k_i>0$. Then the fibrewise Euler class of the universal 1-connected $X$-fibration $E\ra \Bhaut_0(X)$ is trivial and hence $E_0^*(X)\cong \Q$.
\end{thm}
\begin{proof}
 The minimal model of $X$ is given by an exterior algebra $A_X=(\Lambda(x_i)_{1\leq i\leq 2n},d=0)$ and by Theorem \ref{modelcdga} the model of the universal 1-connected fibration is given by 
 \begin{equation}
  B_X:=\cC^*_{CE}(\Der^+(A_X);\Q)\lra E_X:=\cC^*_{CE}(\Der^+(A_X);A_X)\cong (B_X\otimes A_X,D).
 \end{equation}
Observe that $\cC^*_{CE}(\Der^+(A_X);A_X)$ is a finitely generated $B_X$-module as the minimal Sullivan model is finite dimensional. Let $\varepsilon_X:A_X\ra \Q$ denote a rational orientation, then $B_X\otimes \varepsilon_X:(B_X\otimes A_X,D)\ra B_X$ is the only module homomorphism that restricts to $\varepsilon_X$ on the fibre and thus is a representative of fibre integration by Proposition \ref{ExtModuleMaps}.

Moreover, $\Hom_{B_X}(E_X,B_X)$ is a minimal semifree module and thus the qua\-si-iso\-mor\-phism $\bar{\Pi}:E_X[-d]\ra \Hom_{B_X}(E_X,B_X)$ from Proposition \ref{fwPD} is in fact an isomorphism by uniqueness of minimal free resolutions \cite[Ex.8,Ch.6]{FHT} (similarly for $(\overline{\Pi\otimes \Pi}$). Hence, the algebraic Umkehr map is given by
  \begin{equation*}
   \De_!:E_X[-|F|]\underset{\cong}{\overset{\bar{\Pi}}{\lra}} \Hom_{B_X}(E_X,B_X)\overset{\De^*}{\lra}\Hom_{B_X}(E_X\otimes_{B_X}E_X,B_X)\underset{\cong}{\overset{(\overline{\Pi\otimes \Pi})^{-1}}{\lra}}E_X\otimes_{B_X}E_X[-2|F|].
  \end{equation*}
  The composition of $\Bar{\Pi}$ with the vector space isomorphism $\Hom_{B_X}(E_X,B_X)\cong (A_X)^{\vee} \otimes B_X$ is given by $\bar{\varepsilon}_{X}\otimes \Id_{B_X}$ where $\bar{\varepsilon}_X:A_X\ra (A_X)^{\vee}$ is the adjoint of $\varepsilon_X:A_X\otimes A_X\ra \Q$. The same statement holds for $\overline{\Pi\otimes\Pi}$ with the appropriate choice of orientation on $X\times X$ given by $\varepsilon_{X\times X}:=\varepsilon_X\otimes \varepsilon_X:(A_X\otimes A_X)^{\otimes 2}\ra \Q$. Note that $\De^*\bar{\Pi}(1)$ is contained in $(A_X\otimes A_X)^{\vee}\otimes 1$ so that $(\overline{\Pi\otimes
 \Pi})^{-1}\De^*\bar{\Pi}(1)$ is in $A_X\otimes A_X\otimes 1\subset E_X\otimes _{B_X}E_X$. A direct computation shows that
  \begin{equation*}
   \De_!(1)=\sum_{S_1\sqcup S_2=F} \pm x_{S_1}\otimes x_{S_2}\in E_X\otimes_{B_X}E_X
  \end{equation*}
 for some signs that can be worked out. Hence, the fibrewise Euler class is $e^{\text{fw}}(\pi)=\De^*\circ \De_!(1)=\sum_{S_1\sqcup S_2=F} \pm x_{F}$ and since $\Pi(e^{fw}(\pi))=\chi(X)=0$, the summands must cancel.
\end{proof}

One can easily see that the group of homotopy self-equivalences $\pi_0(\haut^+(X))$ is not finite in most cases, and so we cannot infer that the full Euler ring is trivial in general except in the two cases below. 
\begin{thm}\label{EulerRingOddSpheresText}
	Let $X$ be either rationally equivalent to  $(S^{2k+1})^{\times n}$ or a finite CW complex rationally equivalent to $S^{2k+1}\times S^{2l+1} $ for $1<k<l$ and $n$ even. Then $E^*(X)=\Q$.
\end{thm}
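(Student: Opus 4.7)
The strategy for both parts is to leverage the preceding proposition, which gives $E^*_0(X)=\Q$, and to argue that the generators $\kappa_i$ already vanish in $H^*(\B\haut^+(X);\Q)$. In both cases this is achieved by controlling the discrete group $\Gamma:=\pi_0(\haut^+(X))$.

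For case~2 ($X$ a finite CW complex with $X\simeq_{\Q} S^{2k+1}\times S^{2l+1}$ and $1<k<l$), the plan is to show that $\Gamma$ is finite; then the standard transfer argument gives $H^*(\B\haut^+(X);\Q)\hookrightarrow H^*(\B\haut_0(X);\Q)$, so $E^*(X)\hookrightarrow E^*_0(X)=\Q$, yielding $E^*(X)=\Q$. Finiteness is read off the minimal model $M_X=\Lambda(x,y)$ with $d=0$ and $|x|=2k+1<|y|=2l+1$: since $|x|$ and $|y|$ are odd, $x^2=y^2=0$, and since $k<l$ the only element of degree $|x|$ (respectively $|y|$) in $\Lambda(x,y)$ is a scalar multiple of $x$ (respectively $y$). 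Hence every cdga automorphism is diagonal, $x\mapsto ax$, $y\mapsto by$, so $\Aut(M_X)$ is the algebraic group $\mathbb{G}_m\times\mathbb{G}_m$, whose $\Z$-points form the finite group $\{\pm 1\}^2$. By \cite[Thm.10.3]{Sul77}, $\Gamma$ is commensurable with this finite group and is therefore finite.

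For case~1 ($X\simeq_{\Q}(S^{2k+1})^n$ with $n$ even), I would begin by computing $\Der^+(\Lambda(x_1,\dots,x_n))$ using the description in the excerpt: since all generators share degree $2k+1$, the constraint $|S|<|f|$ forces $\#S=0$, so $\Der^+$ is the abelian Lie algebra on $\{\partial/\partial x_i\}_{i=1}^{n}$ concentrated in Lie degree $2k+1$. The cdga model of Theorem~\ref{modelcdga} then simplifies to the Koszul complex $(\Q[y_1,\dots,y_n]\otimes\Lambda(x_1,\dots,x_n),\,D(x_i)=-y_i)$ with $|y_i|=2k+2$; in particular $\B\haut_0(X)\simeq_{\Q} K(\Q^n,2k+2)$ and $\B\haut_{0,*}(X)\simeq_{\Q}\ast$. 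The universal covering spectral sequence therefore collapses to $H^*(\B\haut_*^+(X);\Q)\cong H^*(\Gamma;\Q)$, inside which $e^{\text{fw}}$ sits in degree $n(2k+1)$. For $n=2$, the group $\Gamma$ is commensurable with $\mathrm{SL}_2(\Z)$, whose rational cohomology is $\Q$; hence $e^{\text{fw}}=0$, all $\kappa_i=0$, and $E^*(X)=\Q$.

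For general even $n$, the plan is to use that $-I\in\Gamma$ (which has determinant $(-1)^n=1$ because $n$ is even) is central and acts on $\Sym^d(\Q^n)=H^{(2k+2)d}(\B\haut_0(X);\Q)$ by $(-1)^d$, so the standard central-element argument gives $H^p(\Gamma;\Sym^d(\Q^n))=0$ for all odd $d$. This kills the odd-weight part of the $E_2$-page of the Hochschild--Serre spectral sequence for $\B\haut_0(X)\to\B\haut^+(X)\to\B\Gamma$, and combined with the fact that $\kappa_i$ restricts trivially to $\B\haut_0(X)$ (so lies in positive filtration), together with a parity analysis of admissible bidegrees using $n$ even, one should be able to conclude $\kappa_i=0$ in each remaining position. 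The main obstacle I expect is exactly this closing step for $n\ge 4$: while the $-I$-argument eliminates odd-weight contributions, ruling out the even-weight classes in $H^{\ge 1}(\Gamma;\Sym^{2m}(\Q^n))$ in which $\kappa_i$ could be detected requires more delicate input on the cohomology of arithmetic groups with polynomial coefficients (such as Borel's stable-range results and virtual-cohomological-dimension bounds), and is the most technical portion of the plan.
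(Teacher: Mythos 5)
Your treatment of the case $X\simeq_\Q S^{2k+1}\times S^{2l+1}$, $1<k<l$, is essentially the paper's: the homotopy classes of automorphisms of the minimal model $\Lambda(x,y)$ form a rank-two torus $\Q^\times\times\Q^\times$ (acting by rescaling $x$ and $y$), its arithmetic subgroups are finite, and Sullivan's arithmeticity theorem together with commensurability gives that $\pi_0(\haut^+(X))$ is finite. Transfer then yields an injection $H^*(\B\haut^+(X);\Q)\hra H^*(\B\haut_0(X);\Q)$, so $E^*(X)\hra E^*_0(X)=\Q$.

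For $X\simeq_\Q (S^{2k+1})^{\times n}$ the first half of your argument is also correct: $\Der^+(\Lambda(x_1,\dots,x_n))$ is the abelian Lie algebra spanned by $\partial/\partial x_i$ in degree $2k+1$, the relative Sullivan model is the acyclic Koszul complex with $D(x_i)=-y_i$, and $\B\haut_{0,*}(X)$ is rationally contractible. But from there you set out to prove that $e^{\text{fw}}(\pi)$ actually vanishes in $H^{n(2k+1)}(\B\haut_*^+(X);\Q)\cong H^{n(2k+1)}(\Gamma;\Q)$, via the central element $-I$ and Borel-type input on the cohomology of arithmetic groups with polynomial coefficients. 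You admit that this does not close for $n\geq 4$, and indeed it is a genuine gap --- the even-weight contributions in $H^{\geq 1}(\Gamma;\Sym^{2m}(\Q^n))$ are not eliminated by your parity argument. The paper never attempts this, and the stronger statement ``$e^{\text{fw}}(\pi)=0$'' is not what one should aim for. Rational contractibility of $\B\haut_{0,*}(X)$ already gives that every class in $H^*(\B\haut_*^+(X);\Q)$ is pulled back along $\B\haut_*^+(X)\ra\B\pi_0(\haut_*^+(X))$, and since that projection factors as $h\circ\pi$ through $\pi:\B\haut_*^+(X)\ra\B\haut^+(X)$ (using $\pi_0(\haut_*^+(X))\cong\pi_0(\haut^+(X))$ for simply connected $X$), the fibrewise Euler class has the form $e^{\text{fw}}(\pi)=\pi^*(h^*e)$ for some class $e$. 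There is no need to decide whether $e^{\text{fw}}(\pi)$ is zero: the push-pull identity of Section~3.1 gives
\[
\kappa_i=\pi_!\bigl(e^{\text{fw}}(\pi)^{i+1}\bigr)=\pi_!\bigl(\pi^*(h^*e)^{i+1}\bigr)=(h^*e)^{i+1}\cdot\pi_!(1)=0,
\]
since $\pi_!(1)\in H^{-n(2k+1)}(\B\haut^+(X);\Q)=0$. This handles every $n$ uniformly, with no cohomology of arithmetic groups. The lesson is that once the fibrewise Euler class is pulled back from the base of the universal fibration, all of the $\kappa_i$ vanish for degree reasons, and proving that $e^{\text{fw}}(\pi)$ itself vanishes is unnecessary extra work.
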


\begin{proof}
    We start with the proof of the second case. By \cite[Thm.10.3]{Sul77} the group $\pi_0(\haut^+(X))$ is commensurable with an arithmetic subgroup of the homotopy classes automorphisms of $A_X$. If $X\simeq_{\Q}S^{2k+1}\times S^{2l+1}$ then the group of automorphisms of $A_X$ modulo homotopy is $\Q^{\times}\times\Q^{\times}$ and the arithmetic subgroups of this linear algebraic group are finite and hence by commensurability so is $\pi_0(\haut^+(X))$. Hence, $E^*(X)\cong E_0^*(X)=\Q$ by Lemma \ref{finiteE} and Theorem \ref{EulerRingOddSpheres}. 
	
	We need to introduce some notation in the first case. Let $\pi:E\ra \Bhaut^+(X)$ denote the universal orientated $X$-fibration and $\pi_0:E_0\ra \Bhaut_0(X)$ the universal 1-connected $X$-fibration. The cdga model of the total space $E_0$ is a free algebra on generators $x_1,\hdots,x_n,y^1,\hdots,y^n$ with differential $D(x_i)=y^i$ and therefore $H(E_0)=\Q$. It is well known that $E$ is homotopy equivalent to the classifying space of pointed homotopy automorphisms $\Bhaut_*^+(X)$ and there is a fibration sequence $E_0\ra E\xrightarrow{H}\mathrm{B} \pi_0(\haut^+_*(X))$ which induces an isomorphism $H^*(E)\cong H^*(\mathrm{B} \pi_0(\haut^+_*(X)))$ since the rational cohomology of $E_0$ is trivial. In particular, $e^{\text{fw}}(\pi)=H^*e$ for some $e\in H^{n\cdot (2k+1)}(\mathrm{B}\pi_0(\haut^+_*(X)))$. It follows from the commutative diagram
	\begin{align*}
	 \begin{tikzcd}[ampersand replacement=\&]
	 E \arrow{d}{\pi} \arrow{r}{H}\arrow[swap]{r}{\simeq_{\Q}} \& \mathrm{B}\pi_0(\haut_*^+(X))\arrow[equals]{d}\\
	 \Bhaut^+(X) \arrow{r}{h} \& \mathrm{B}\pi_0(\haut^+(X)),
	 \end{tikzcd}
	\end{align*}
	that the fibrewise Euler class $e^{\text{fw}}(\pi)=H^*e=\pi^*h^*e$ is pulled back from the base, and therefore that the fibre integrals $\pi_!(\pi^*(h^*e)^k)=(h^*e)^k\cdot \pi_!(1)=0$ all vanish.	
\end{proof}
\begin{rem}
 There are more cases when the group of homotopy self-equivalences $\mathcal{E}(X)$ is finite for a space $X$ that is rationally equivalent to a product of odd dimensional spheres, and the second case in Theorem \ref{EulerRingOddSpheresText} is merely one of the simplest to establish. But we expect that the Euler ring vanishes in general even if $\mathcal{E}(X)$ is not finite using recent results in \cite{BZ22}, where Berglund and Zeman have given a rational description of $\Bhaut(X)$ via a  fibre sequence \[\Bhaut_u(X)\lra \Bhaut(X)\lra \mathrm{B}\Gamma(X),\] where $\Gamma(X)$ is a certain arithmetic group and $\Bhaut_u(X)$ is the classifying space of normal unipotent $X$-fibrations. They provide $\Gamma(X)$-equivariant models for $\Bhaut_u(X)$ so that one can obtain an algebraic model for $\Bhaut(X)$. We expect that one can extend the results in this paper using their results to study Euler rings in more generality, and in particular that the Euler ring $E^*(X)$ vanishes for $X$ a product of odd spheres in general.
\end{rem}

\bibliographystyle{alpha}
\bibliography{../../../../Bibliography/central-bib}

\end{document}